\newtheorem{theorem}{Theorem}[section]
\theoremstyle{definition}
\newtheorem{remark}{Remark}[section]
\subjclass[2010]{34C23; 92D25; 92D30}
 \keywords{Allee effect-driven finite time extinction; Defense mechanisms; Bifurcation analysis; Eco-epidemic model; }
\begin{document}
\title[Prey Aggregation induced Allee Effect]{Dynamics of Diseased-Impacted Prey Populations: Defense and Allee Effect Mechanisms}

\author[Antwi-Fordjour, Overton, Lee]{}
\maketitle

\centerline{\scshape  Kwadwo Antwi-Fordjour$^{1*} {\orcidlink{0000-0002-4961-0462}}$,  Zachary Overton$^{1}$, Dylan Lee$^1$}

\vspace{.7cm}
{\centerline{1) Department of Mathematics and Computer Science,}
 \centerline{ Samford University,}
 \centerline{ Birmingham, AL 35229, USA}
}


\vspace{.7cm}
\centerline{ *Corresponding author's email: kantwifo@samford.edu;}
\smallskip
\centerline{ Contributing authors: zoverton@samford.edu; dlee9@samford.edu;}

\begin{abstract}
This study introduces an innovative framework for merging ecological and epidemiological modeling via the formulation of a sophisticated predator-prey model that addresses the intricacies of disease dynamics, the Allee effect, and defensive mechanisms through prey aggregation. Employing rigorous stability and bifurcation analyses, we identify multiple feasible equilibria and establish critical thresholds that influence population survival and extinction. Our mathematical model reveals that the intensity of the Allee effect plays a crucial role in shaping population recovery and disease persistence, offering pivotal insights into finite time extinction mechanisms. We further illustrate, through extensive numerical simulations, that adjusting susceptible prey aggregation strategically can substantially reduce disease transmission, emphasizing the applicability of our findings for practical conservation interventions. The combined modulation of the aggregation constant and Allee effect determined three primary ecological outcomes: stable coexistence, elimination of infected prey, and complete population extinction. Moreover, these results have significant implications for wildlife management and ecosystem resilience, providing a solid theoretical framework for interdisciplinary strategies aimed at protecting endangered species. 
\end{abstract}

\section{Introduction}
\noindent The interdisciplinary field of eco-epidemiology bridges ecological science and epidemiology to study the complex interplay between infectious diseases and the ecological communities encompassing humans and wildlife \cite{gomez2024eco}. Ecological factors, including species diversity, habitat fragmentation, and climate change, can significantly influence pathogen transmission and host susceptibility  \cite{lunn2021spatial,panigoro2019dynamics}. For example, the biodiversity hypothesis posits that ecosystems with higher species diversity tend to have lower disease prevalence, as diverse communities can dilute the impact of pathogens \cite{keesing2010impacts}. Sahoo and Poria developed a predator-prey model incorporating parasitic infection in the prey \cite{sahoo2014effects}. Their research demonstrated that providing additional food to predators can control parasitic infection in prey species. Alam conducted both numerical and analytical investigations into the risk factors associated with a predator-prey model, incorporating a discrete time delay to account for the gestation period of the prey in predator dynamics. \cite{alam2009risk}. Kumar et al. recently examined the disease transmission dynamics in leptospirosis, a zoonotic disease, by incorporating two discrete delays, including the incubation period delay \cite{kumar2025dynamical}. Ultimately, eco-epidemiology provides a vital framework for addressing the challenges posed by infectious diseases.

The Allee effect is a fundamental ecological mechanism that describes how species interactions influence population dynamics \cite{sahoo2024crucial}. It is characterized by a positive correlation between population size and individual fitness, meaning that smaller populations experience reduced per capita growth rates, which can lead to further declines and an increased risk of extinction \cite{allee1927animal,courchamp2008allee}. This mechanism is particularly significant in the conservation of endangered species, as it can create a vicious cycle where declining numbers make recovery increasingly difficult. Understanding the role of the Allee effect is therefore essential for effective conservation strategies and ecosystem management, as it helps predict population persistence and informs efforts to prevent extinction \cite{kumbhakar2023bistability,dennis1989allee,saifuddin2017eco}. A weak Allee effect manifests as a reduction in per capita growth rate at low population densities, yet crucially, it lacks a critical population threshold below which decline is inevitable. Conversely, a strong Allee effect is distinguished by the presence of such a threshold; below this critical population size, the per capita growth rate becomes negative, predisposing the population to extinction \cite{fadai2020unpacking}. Furthermore, eco-epidemiological models investigate how changes in prey and predator populations, influenced by various ecological dynamics such as the Allee effect, can result in different patterns of disease emergence and persistence \cite{ma2023asymptotic}.  Shaikh and Das examined how the Allee effect influences an eco-epidemic predator-prey model with a Holling Type II functional response \cite{shaikh2020eco}. Sharma and Samanta proposed a ratio-dependent predator-prey model that incorporates disease in the prey population with Allee effects \cite{sharma2015ratio}. Also, see \cite{kumar2020impact} for similar research on ratio-dependent prey-predator model model but with disease in predator and subjected to strong Allee effect in prey. Recently, Samanta et al. observed multistability and chaos in their proposed eco-epidemiological system with Allee effect. Again, the authors observed that, a strong Allee effect can prevent the system from going into chaos, but once a certain threshold is achieved, the prey population is eradicated leading to the eventual eradication of the predator population. \cite{samanta2024multistability}.

The predator-prey model with prey defense mechanisms via Rosenzweig functional response (or prey aggregation) have been studied by several researchers, see \cite{R71,antwi2024dual,B12,V18,M18} and references therein. Prey aggregation is a crucial ecological phenomenon that influences population dynamics and species interactions within ecosystems \cite{godfray1992aggregation}. This behavior, characterized by individuals of a species coming together in groups, can serve multiple functions, such as enhancing foraging efficiency, providing protection from predators, and facilitating social interactions \cite{sumpter2006principles,krausz2013living}. Aggregation is particularly important in the context of disease dynamics, as it can affect transmission rates among populations. Farrell et al. investigated power incidences functions that depend on the number of susceptibles and infectives by powers strictly between 0 and 1 \cite{farrell2018fatal}. These powers are well known to be associated with host extinction \cite{APB20,KV16}. Despite the recognized importance of both susceptible prey aggregation and the Allee effect in shaping population dynamics, no studies have yet examined their combined influence on disease prevalence within prey populations. This lack of investigation represents a significant knowledge gap, and addressing it is vital for advancing our understanding of eco-epidemiological interactions.

The current study is designed to provide a rigorous examination of the interconnected roles of the Allee effect, disease transmission, and host defense mechanisms in shaping eco-epidemiological outcomes. By addressing the following specific research questions, this work aims to contribute novel insights to the field:
\begin{itemize}
\item[(i)] Investigate how the strength of the Allee effect influences the dynamics of the prey population in the presence of infectious disease.
\item[(ii)] Identify targeted strategies that utilize susceptible prey aggregation to mitigate disease transmission.
\item[(iii)] Analyze the ecological significance of multiple equilibrium states within the predator-prey-disease framework.
\item[(iv)] Explore the bifurcation structures associated with the proposed model, highlighting critical transitions in system behavior.
\item[(v)] Assess the combined impact of susceptible prey aggregation and the Allee effect on disease persistence and population stability.
\end{itemize}

This paper is organized as follows: Section \ref{sec:model formulation} presents the formulation of our mathematical model, detailing the assumptions and governing equations. Section \ref{sec:preliminary results} establishes the fundamental properties of the model, including nonnegativity and boundedness of solutions. In Section \ref{sec:equilibria and stability}, we analyze the equilibrium points and their local stability. Section \ref{sec:bifucation analysis} provides a bifurcation analysis, illustrating key dynamical transitions in the model. Section \ref{sec: Disease managemment} explores disease management strategies by targeting susceptible prey aggregation, while Section \ref{sec: Allee-driven extinction} investigates Allee effect-driven finite time extinction. Finally, Section \ref{sec:conclusion} concludes with a discussion on broader implications and future research directions.

\section{The Mathematical Model}\label{sec:model formulation}
\noindent We develop a mathematical model to explore the dynamics of a predator-prey system where the prey population is subject to an infectious disease. Consequently, the prey population is divided into susceptible ($S(t)$) and infected ($I(t)$) classes at time $t$. The predator population is represented by $P(t)$. Our model is constructed based the following biological meaningful assumptions:

\begin{itemize}
\item[(i)] Susceptible prey contract the infection via direct contact with infected individuals, reflecting typical ecological transmission modes. Predators are assumed to be immune to the disease.
\item[(ii)] Infection in prey leads to a decline in reproductive capacity and a diminished ability to compete for resources, effectively reducing their ecological fitness. 
\item[(iii)]  Infected prey are assumed to experience permanent infection, consistent with diseases lacking significant recovery.
\item[(iv)] We assume that the ecosystem's carrying capacity is determined by the total prey population, encompassing both susceptible and infected population. This reflects the ecological interaction where both prey classes utilize shared resources. The continuous growth function is given by:
\begin{align*}
\frac{dS}{dt} &= a_0 S \left(1-\frac{S+I}{K}\right),
\end{align*}
where $a_0$ denotes prey birth rate and $K$ is the environmental carrying capacity of the susceptible and infected prey population. 
\item[(v)]  We assume that the susceptible prey population aggregates when they encounter predators to protect themselves \cite{antwi2024dual}. This is modeled by $S^r$, where $r\in (0,1)$ is the aggregating constant. 
\item[(vi)] When susceptible prey encounter predators, their defensive behaviors create an Allee effect within their population. The continuous growth function for the susceptible prey population now becomes:
\begin{align*}
\frac{dS}{dt} &= a_0 S \left(1-\frac{S+I}{K}\right)(S-L).
\end{align*}
To quantify the strength of the Allee effect, we define a threshold $L$. A weak Allee effect is present in the regime $-K<L\leq  0$, indicating a limited impact on population growth at low densities, see Figure \ref{fig:perCapitaGrowth}. Conversely, a strong Allee effect, in the regime $0<L<K$, signifies a more pronounced negative impact on growth at low densities \cite{debnath2020global}. The per-capita growth rate of the susceptible prey population is given by the rate of change of the susceptible prey population divided by the susceptible  population size. The per-capita growth rate of the susceptible prey is given by
\begin{align*}
\frac{1}{S}\frac{dS}{dt}= a_0 \left(1-\frac{S+I}{K}\right)(S-L).
\end{align*}

\begin{figure}[hbt!]
\begin{center}
    \includegraphics[width=8cm, height=7cm]{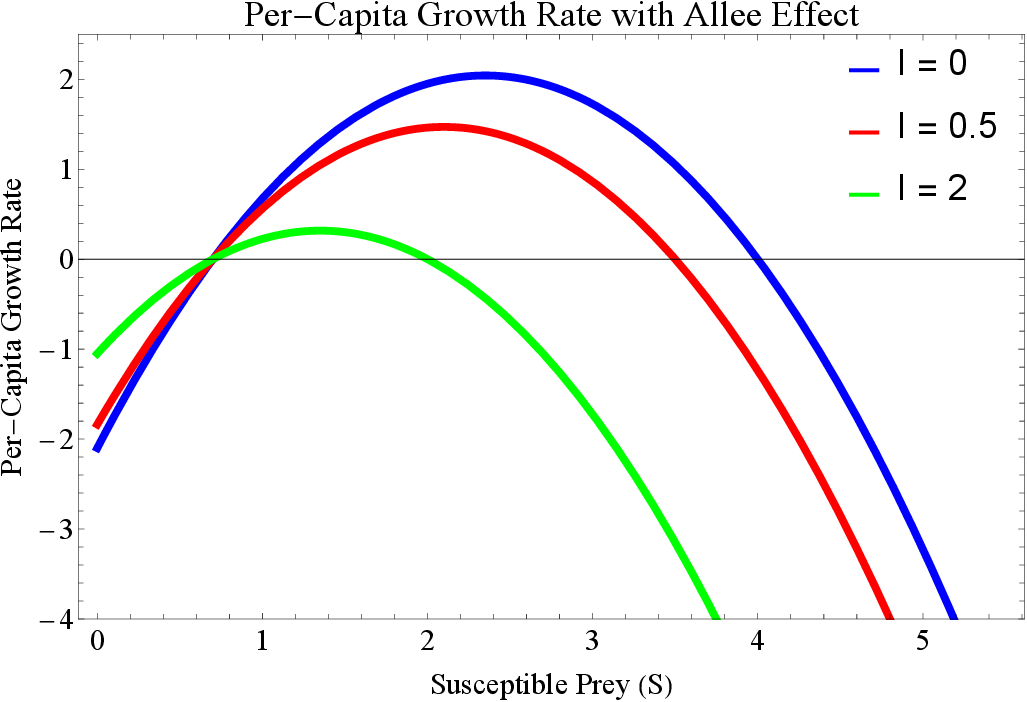}
\end{center}
\caption{Per-capita growth rate of the susceptible prey population for different fixed infected prey values, i.e., $I=0$, $I=0.5$ and $I=2$. The parameters are fixed as $a_0=3,~K=4,~L=0.7$}
\label{fig:perCapitaGrowth}
\end{figure}
\item[(vii)] The predator and infected prey interactions are modeled via mass action (or type I response function), with consumption rate increasing linearly to a saturation point. Mass action also governs susceptible and infected prey interactions.
\item[(viii)] The model incorporates natural death rates for each population: susceptible prey, infected prey, and predator.

\end{itemize}

\noindent Building upon the stated assumptions, we formulate the interaction between susceptible prey, infected prey, and predators (SIP) through the following model:
\begin{align}
\nonumber \frac{dS}{dt} &= a_0 S \left(1-\frac{S+I}{K}\right)(S-L)  - d_0S^rP - e_0SI = W_1(S,I,P), \\
   \frac{dI}{dt} &= -a_1I + e_0SI - d_1IP = W_2(S,I,P), \label{Mainsystem}  \\
\nonumber     \frac{dP}{dt} &= -a_2P + d_2S^rP + d_3IP = W_3(S,I,P),
\end{align}
with initial conditions
\begin{align*}
S(0)\geq 0,~I(0)\geq 0,~P(0)\geq 0.
\end{align*}

\begin{table}[htbp]
    \caption{Biological description of variables/parameters used in model \eqref{Mainsystem}.}
    \label{tab:description}    
    \begin{tabular}{| c | p{0.8\textwidth} |}
        \hline 
        Variable/Parameter & Biological Description \\
 \hline
 $S$ & Susceptible prey density \\ 
 $I$ & Infected prey density  \\ 
 $P$ & Predator density  \\
 $a_0$ & Susceptible prey natural birth rate \\ 
 $d_0$ & Attack rate of the predator on the susceptible prey\\ 
 $e_0$ & Disease transmission rate\\
 $K$ & Carrying capacity \\
 $L$ & Allee threshold \\ 
 $r$ & Aggregating constant\\
 $a_1$ & Infectious prey natural death rate \\
 $d_1$ & Attack rate of the predator on infectious prey\\
 $a_2$ & Predator natural death rate \\
 $d_2$ & Constant representing the rate at which predators convert susceptible prey biomass into their own, accounting for both attack rate and conversion efficiency \\
 $d_3$ & Constant representing the rate at which predators convert infected prey biomass into their own, accounting for both attack rate and conversion efficiency \\
 \hline
\end{tabular}
\end{table}

\noindent The biological description of variables/parameters used in model \eqref{Mainsystem} are listed in Table \ref{tab:description}. All parameters are assumed to be positive except the Allee threshold. We shall assume biological meaningful parameter values and initial conditions for this study.

\section{Preliminary Results}\label{sec:preliminary results}
\noindent To make sure the model \eqref{Mainsystem} is both biologically and mathematically meaningful, we first examine whether its solutions are well-posed. This means showing that the population values remain nonnegative and within realistic limits. Nonnegativity ensures that population sizes never become negative, which would be biologically impossible. Boundedness reflects natural resource limits and carrying capacity, preventing unrealistic population growth. These properties are essential for understanding how the system behaves over time and its ecological significance.
\subsection{Nonnegativity}
\begin{theorem}\label{thm:nonnegativity}
All solutions $(S(t), I(t), P(t))$ of the model \eqref{Mainsystem} are nonnegative for all $t\geq 0$.
\end{theorem}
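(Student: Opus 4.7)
The plan is to establish nonnegativity of each of the three components separately, treating the infected prey $I$ and predator $P$ equations by the standard integrating-factor / exponential representation, and handling the susceptible prey $S$ by a continuity-based contradiction argument that leverages the vanishing of the right-hand side on the coordinate plane $\{S=0\}$.

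First, I would rewrite the second equation of \eqref{Mainsystem} in the multiplicative form
$$\frac{dI}{dt}=I\bigl(-a_1+e_0 S-d_1 P\bigr),$$
so that separating variables yields
$$I(t)=I(0)\exp\!\left(\int_{0}^{t}\bigl(-a_1+e_0 S(s)-d_1 P(s)\bigr)\,ds\right)\ge 0$$
whenever $I(0)\ge 0$, regardless of the signs of $S,P$ along the orbit. The identical manipulation on the third equation gives
$$P(t)=P(0)\exp\!\left(\int_{0}^{t}\bigl(-a_2+d_2 S^{r}(s)+d_3 I(s)\bigr)\,ds\right)\ge 0,$$
provided $S(s)\ge 0$ along the trajectory so that $S^{r}$ is real-valued. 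These two steps are essentially automatic; nothing subtle happens.

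The $S$ equation does not factor through $S$ in the same clean way because of the aggregation term $-d_0 S^{r}P$ with $r\in(0,1)$. Here I would argue by contradiction: assume $S(0)>0$ and set $T^{\star}=\sup\{T>0:S(t)>0\text{ on }[0,T)\}$. If $T^{\star}<\infty$, continuity forces $S(T^{\star})=0$. Every term on the right-hand side of the $S$-equation contains either an $S$ or an $S^{r}$ factor, so $W_1(0,I,P)=0$ and therefore $\dot S(T^{\star})=0$. Moreover, the coordinate plane $\{S=0\}$ is invariant under the flow: on it, \eqref{Mainsystem} reduces to the decoupled planar subsystem $\dot I=-a_1 I-d_1 IP$, $\dot P=-a_2 P+d_3 IP$, which preserves the nonnegative quadrant of the $(I,P)$ plane. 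Hence the trajectory cannot cross $\{S=0\}$ into the region $\{S<0\}$ (where, anyway, $S^{r}$ is not defined over $\mathbb{R}$), contradicting the definition of $T^{\star}$. The boundary case $S(0)=0$ is absorbed by noting that $S\equiv 0$ paired with a nonnegative solution of the reduced subsystem gives a nonnegative trajectory.

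The main obstacle is the non-Lipschitz term $S^{r}$ with $r\in(0,1)$: the usual one-line tangency argument ($\dot S=0$ at $S=0$, hence $S$ cannot cross) is not by itself airtight when the vector field fails to be Lipschitz at the boundary. I would address this by framing the result as positive invariance of $\mathbb{R}_{\ge 0}^{3}$ (the natural domain of the model, since $S^{r}$ is not real-valued for $S<0$) and by explicitly using the invariance of the coordinate plane $\{S=0\}$ as a barrier, rather than relying on uniqueness of solutions through the boundary. Everything else is routine bookkeeping.
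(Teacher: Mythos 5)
Your proof is correct, and for the susceptible-prey component it takes a genuinely different (and more careful) route than the paper. The paper applies the same exponential representation to all three equations, writing $S(t)=S(0)\exp\bigl(\int_0^t[a_0(1-\tfrac{S+I}{K})(S-L)-d_0S^{r-1}P-e_0I]\,ds\bigr)$; since $r-1\in(-1,0)$, the integrand contains the factor $S^{r-1}$, which is singular precisely where the argument matters, namely as $S\to 0$ --- and the paper itself shows in Section 7 that $S$ can reach zero in finite time, so this representation is only formal there. You use the exponential representation only for $I$ and $P$ (where it is clean, exactly as in the paper) and replace it for $S$ by a barrier argument: every term of $W_1$ carries a factor $S$ or $S^{r}$, so $W_1$ vanishes on $\{S=0\}$, that plane is invariant, and $S^{r}$ is not even real-valued for $S<0$, so the closed octant is positively invariant. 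You also correctly flag that the non-Lipschitz term $S^{r}$ defeats the usual uniqueness-based tangency argument, which is exactly the gap the paper's one-line computation glosses over. Two small points: first, reaching $S(T^{\star})=0$ is not itself a contradiction (the theorem asserts nonnegativity, not positivity, and finite-time hitting of zero is a feature of this model), so the conclusion of your argument should simply be that $S$ never becomes negative rather than that $T^{\star}=\infty$; second, your nonnegativity of $P$ presupposes $S\ge 0$ so that $S^{r}$ is defined, so the $S$ step should logically precede the $P$ step, which your framing via positive invariance of $\mathbb{R}^{3}_{\ge 0}$ already accommodates.
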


\begin{proof}
The right hand side of model \eqref{Mainsystem} is continuous and locally non-smooth function of the dependent variable $t$. We obtain the following after integration.
\begin{align*}
S(t)&=S(0)\text{exp}\left(\displaystyle\int_0^t \Big[a_0\left(1-\frac{S+I}{K}\right)(S-L) - d_0S^{r-1}P - e_0 I\Big] ds\right)\geq 0 \\
I(t)&=I(0)\text{exp}\left(\displaystyle\int_0^t \Big[-a_1 + e_0S - d_1P\Big]   ds\right)\geq 0\\
P(t)&=P(0)\text{exp}\left(\displaystyle\int_0^t \Big[-a_2 + d_2S^r + d_3I\Big] ds\right)\geq 0
\end{align*}

\noindent Therefore, all solutions initiating from the interior of 

\begin{equation*}
    \mathbb{R}^3_+ = \{(S(t), I(t), P(t)): S(t)\geq 0, I(t)\geq 0,  P(t)\geq 0\}.
\end{equation*}

remain in it for all future time.
\end{proof}

\subsection{Boundedness}

\begin{theorem}
All the solutions $(S(t), I(t), P(t))$ of the model \eqref{Mainsystem} with positive initial conditions are uniformly bounded if $d_0 > d_2$ and $d_1 > d_3$.
\end{theorem}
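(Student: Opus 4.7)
The plan is to construct a single Lyapunov-type functional $V(t) = S(t) + I(t) + P(t)$ and derive a differential inequality $\dot V + \mu V \leq M$ for positive constants $\mu, M$, from which a standard comparison argument gives $V(t) \leq V(0)e^{-\mu t} + M/\mu$ and hence uniform boundedness of each component. As a preliminary step I would bound the susceptible class directly from its own equation: at any state with $S \geq K$ (and using that the Allee setup guarantees $L < K$), the factor $1 - (S+I)/K$ is nonpositive while $S - L > 0$, so the Allee--logistic term is nonpositive, and combined with the obviously nonpositive $-d_0 S^r P$ and $-e_0 SI$ this yields $\dot S \leq 0$ whenever $S \geq K$. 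Hence $S(t) \leq S_{\max} := \max(S(0), K)$ for all $t \geq 0$.

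Next I would add the three equations. The cross terms $\pm e_0 SI$ cancel, and the sign hypotheses $d_0 > d_2$ and $d_1 > d_3$ turn the surviving conversion residuals $(d_2 - d_0)S^r P$ and $(d_3 - d_1) IP$ into nonpositive quantities, leaving
\[
\dot V \leq a_0 S\!\left(1 - \tfrac{S+I}{K}\right)(S-L) - a_1 I - a_2 P.
\]
Choosing $\mu$ with $0 < \mu < \min(a_1, a_2)$, the contributions $(\mu - a_1)I$ and $(\mu - a_2)P$ that appear when $\mu V$ is added to both sides are nonpositive, so the problem reduces to bounding the Allee--logistic expression plus $\mu S$ by a constant.

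For that bound, factoring gives
\[
a_0 S\!\left(1 - \tfrac{S+I}{K}\right)(S-L) = a_0 S(S-L)\!\left(1 - \tfrac{S}{K}\right) - \tfrac{a_0 S(S-L)}{K}\, I,
\]
whose first piece is a cubic on $S \in [0, S_{\max}]$ and therefore bounded by some constant $M_1$. The coefficient of $I$ in the second piece is nonpositive whenever $S \geq L$, which is automatic throughout the weak Allee regime $L \leq 0$; in that case $\dot V + \mu V \leq M_1 + \mu S_{\max}$ and Gronwall closes the argument.

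The hard part will be the strong Allee window $0 < S < L$, where the coefficient $a_0 S(L-S)/K$ of $I$ is positive and peaks at $a_0 L^2/(4K)$. Absorbing this linear-in-$I$ growth into the $-a_1 I$ dissipation forces the choice $\mu \leq a_1 - a_0 L^2/(4K)$, so an admissible positive $\mu$ exists precisely when $a_1 > a_0 L^2/(4K)$. I expect this threshold either to enter implicitly as an additional structural condition or to be circumvented by passing to a weighted functional $\alpha S + \beta I + \gamma P$ in which the cross term $(\beta - \alpha) e_0 SI$ is tuned to offset the destabilizing $I$-term in the Allee--logistic factor. Once the right-hand side of $\dot V + \mu V$ is bounded by a constant $M$, the standard differential-inequality comparison yields the claimed uniform boundedness.
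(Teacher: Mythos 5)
Your strategy is exactly the paper's: sum the three equations into $V=S+I+P$, use $d_0>d_2$ and $d_1>d_3$ to discard the conversion residuals $(d_2-d_0)S^rP$ and $(d_3-d_1)IP$, choose $0<\mu<\min\{a_1,a_2\}$, and close with the standard comparison lemma. Your preliminary step is actually cleaner than the paper's: you derive $S(t)\le\max\{S(0),K\}$ from the sign of $dS/dt$ on the set $\{S\ge K\}$, whereas the paper simply declares $L+K$ to be an upper bound for $S$ without argument.

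The gap is the one you name yourself, and it is genuine: the strong Allee window $0<S<L$. There the coefficient $a_0S(L-S)/K$ of $I$ is positive (as large as $a_0L^2/(4K)$), and absorbing it into the $-a_1I$ dissipation forces $\mu\le a_1-a_0L^2/(4K)$, i.e.\ an extra hypothesis $a_1>a_0L^2/(4K)$ that does not appear in the theorem; your alternative of a weighted functional $\alpha S+\beta I+\gamma P$ is only sketched and, if carried out, would itself impose a further parameter restriction (tuning $(\beta-\alpha)e_0SI$ against $\alpha a_0S(L-S)I/K$ requires roughly $\beta\le\alpha\bigl(1-\tfrac{a_0L}{e_0K}\bigr)$, hence $a_0L<e_0K$). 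As written, your argument is complete only for $L\le 0$. You should know that the paper's proof elides the very same point: its key step, bounding $a_0\bigl(1-\tfrac{S+I}{K}\bigr)(S-L)$ by $-\tfrac{a_0S^2}{K}+\tfrac{a_0S(L+K)}{K}$, amounts to dropping $I$ from the logistic factor, which reverses direction precisely when $S<L$ (and it also silently discards a constant $-a_0L$, which is only legitimate when $L\ge 0$). So you have located a real subtlety rather than missed an idea the paper supplies; but to turn your proposal into a proof you must either restrict to the weak Allee regime $L\le 0$, add an explicit condition such as $a_1>a_0L^2/(4K)$, or actually execute the weighted-functional argument together with whatever hypothesis it needs.
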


\begin{proof}
Let us define the function $R\left(S(t),I(t),P(t)\right) = S(t) + I(t) + P(t)$. Then
\begin{align*}
    \dfrac{dR}{dt} &= \dfrac{dS}{dt} + \dfrac{dI}{dt} + \dfrac{dP}{dt}\\
    &= a_0 S \left(1-\frac{S+I}{K}\right)(S-L)  - d_0S^rP - e_0SI-a_1I + e_0SI - d_1IP -a_2P \\
    & + d_2S^rP + d_3IP  \\
    &= a_0 S \left(1-\frac{S+I}{K}\right)(S-L) - a_1I -a_2P  + (d_2-d_0)S^rP + (d_3- d_1)IP \\
\end{align*}
Let $\mu \in \mathbb{R^+}$, where $\mu \le \min\{ a_1,a_2\}$ and let $L+K>0$ be the upper bound of $S$. Assume $d_0\ge d_2$ and $d_1\ge d_3$. We obtain the following differential inequality:
\begin{align*}
   \frac{dR}{dt}+\mu R &\leq  a_0 S \left(1-\frac{S+I}{K}\right)(S-L) +\mu S + (\mu -a_1)I +(\mu -a_2)P \\
 &\leq  a_0 S \left(1-\frac{S+I}{K}\right)(S-L) +\mu S  \\
   & =  S \left[a_0 \left(1-\frac{S+I}{K}\right)(S-L) +\mu \right] \\
    & \le  S \left(-\dfrac{a_0 S^2}{K} + \dfrac{a_0 S (L+K)}{K} +\mu \right) \\
      & \le  (L+K) \left(-\dfrac{a_0 S^2}{K} + \dfrac{a_0 S (L+K)}{K} +\mu \right) \\
       & \le  (L+K) \left(\mu  +\dfrac{a_0 (L+K)^2}{4K}\right) \\
\end{align*}

\noindent Take $Q=(L+K) \left(\mu  +\dfrac{a_0 (L+K)^2}{4K}\right)$. Thus by applying the theorem on differential inequality, we obtain
\[0\leq R(S(t),I(t),P(t))\leq \frac{Q(1-e^{-\mu t})}{\mu} + R(S(0),I(0),P(0))e^{-\mu t},\]
which implies
\[\limsup_{t\rightarrow \infty}R(S(t),I(t),P(t))\leq \frac{Q}{\mu}.\]

\noindent Hence, all the solutions of model \eqref{Mainsystem} which initiated in $\mathbb{R}_+^3$ are confined in 
\begin{equation*}
    \Upsilon = \{(S, I, P) \in \mathbb{R}^3_+ : R(S(t), I(t), P(t)) \leq \frac{Q}{\mu} + \epsilon, \epsilon \in \mathbb{R}\}.
\end{equation*}
\end{proof}

\section{Equilibria and their Local Stability}\label{sec:equilibria and stability}
\noindent In this Section, we investigate the existence of ecologically meaningful equilibria for the model \eqref{Mainsystem} and then study their stability. 
\subsection{Equilibria}
The analysis of the ecologically feasible equilibria of model \eqref{Mainsystem} involves considering the susceptible prey, infected prey, and predator nullclines, expressed by
\begin{align}
   \label{susc=0} 0 &= S\left( a_0  \left(1-\frac{S+I}{K}\right)(S-L) - d_0S^{r-1}P - e_0I\right) \\
    \label{inf=0} 0 &= I(-a_1 + e_0S - d_1P) \\
   \label{pred=0} 0 &= P(-a_2 + d_2S^r + d_3I).
\end{align}
We obtain below the ecologically feasible equilibria.
\begin{itemize}
\item[(i)] Extinction equilibrium $E_0=(0,0,0)$.
\item[(ii)] There are two susceptible prey only equilibrium points, namely\\
\begin{enumerate}
\item[(a)] $E_1^1=(S_1^1,0,0)$, where $S_1^1=K$. 
\item[(b)] $E_1^2=(S_1^2,0,0)$, where $S_1^2=L$. 
\end{enumerate}
\item[(iii)] Predator free $E_2=(S_2,I_2,0)$, where $S_2=\frac{a_1}{e_0}$ and $I_2=-\frac{a_0 \left(a_1-e_0 K\right) \left(a_1-e_0 L\right)}{e_0 \left(-a_0 e_0 L+a_0 a_1+e_0^2 K\right)}$. $E_2$ is meaningful provided $I_2>0$.
\item[(iv)] Infectious prey free $E_3=(S_3,0,P_3)$, where $S_3=\left(\frac{a_2}{d_2} \right)^{1/r}$ and\\
 $P_3=-\frac{a_0 \left(\left(\frac{a_2}{d_2}\right){}^{1/r}\right){}^{1-r} \left(\left(\frac{a_2}{d_2}\right){}^{1/r}-K\right) \left(\left(\frac{a_2}{d_2}\right){}^{1/r}-L\right)}{d_0 K}$. $E_3$ is meaningful provided $P_3>0$.
\item[(v)] Coexistence $E_4=(S_4,I_4,P_4)$.
Finding positive solutions for the model \eqref{Mainsystem} analytically is challenging. Therefore, we investigate sufficient conditions that ensure the biological relevance of the coexistence equilibrium point. The coexistence (or endemic) equilibrium point(s) can be determined using the nullcline equations provided below.
\begin{align}
   \label{susc=S*} 0 &= a_0 \left(1-\frac{S+I}{K}\right)(S-L)  - d_0S^{r-1}P - e_0I \\
    \label{inf=I*} 0 &= -a_1 + e_0S - d_1P  \\
   \label{pred=P*} 0 &= -a_2 + d_2S^r + d_3I.
\end{align}   
provided $S>0,I>0,P>0$
\end{itemize}
Solving equation \eqref{pred=P*} allows us to express $I^*$ as a function of the susceptible prey ($S^*$).
\begin{equation}\label{coexistence S*}
I^*=\dfrac{a_2-d_2S^{*r}}{d_3}.
\end{equation}

\noindent Furthermore, we note that $S^*<\left(\frac{a_2}{d_2}\right)^{1/r}=S_3$ when $I^*>0$. Now, we solve for $P^*$ equation \eqref{inf=I*},
\begin{equation}\label{eqn:quad-4-interior}
P^*=\dfrac{-a_1+e_0 S^*}{d_1}.
\end{equation} 
We observe that $P^*>0$ when $S^*>\frac{a_1}{e_0}=S_2$. Combining $S_2$ and $S_3$, we obtain sufficient condition for the existence of $S^*$,
\begin{equation}\label{ineq:condition in S*}
0<S_2<S^* < S_3.
\end{equation}

\begin{remark}\label{remark:equilibrium points}
From Figure \ref{fig:equilibriumPoints}, we observe that the number of interior equilibria varies with the Allee threshold. In Figure \ref{fig:equilibriumPoints}(a), a single interior equilibrium exists at $E_4 = (2.6134, 0.7875, 2.7887)$ when $L=-0.5$. In contrast, Figure \ref{fig:equilibriumPoints}(b) shows the presence of two interior equilibria, located at $E_4^1 = (2.4296, 0.8310, 2.5524)$ and $E_4^2 = (0.9578, 1.2660, 0.6600)$ when $L=-0.1$. 
\end{remark}

\begin{figure}[hbt!]
\begin{center}
\subfigure[]{
    \includegraphics[width=6.15cm, height=5cm]{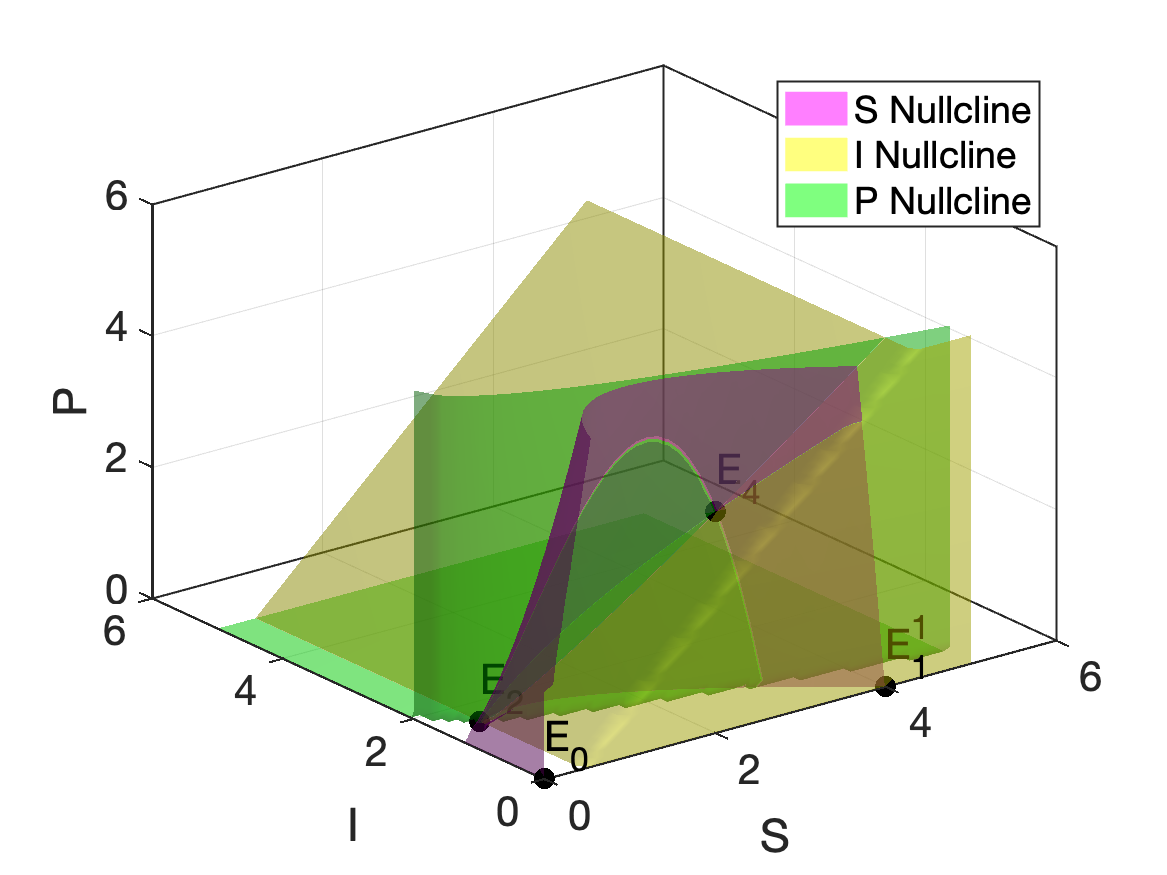}}
\subfigure[]{    
    \includegraphics[width=6.15cm, height=5cm]{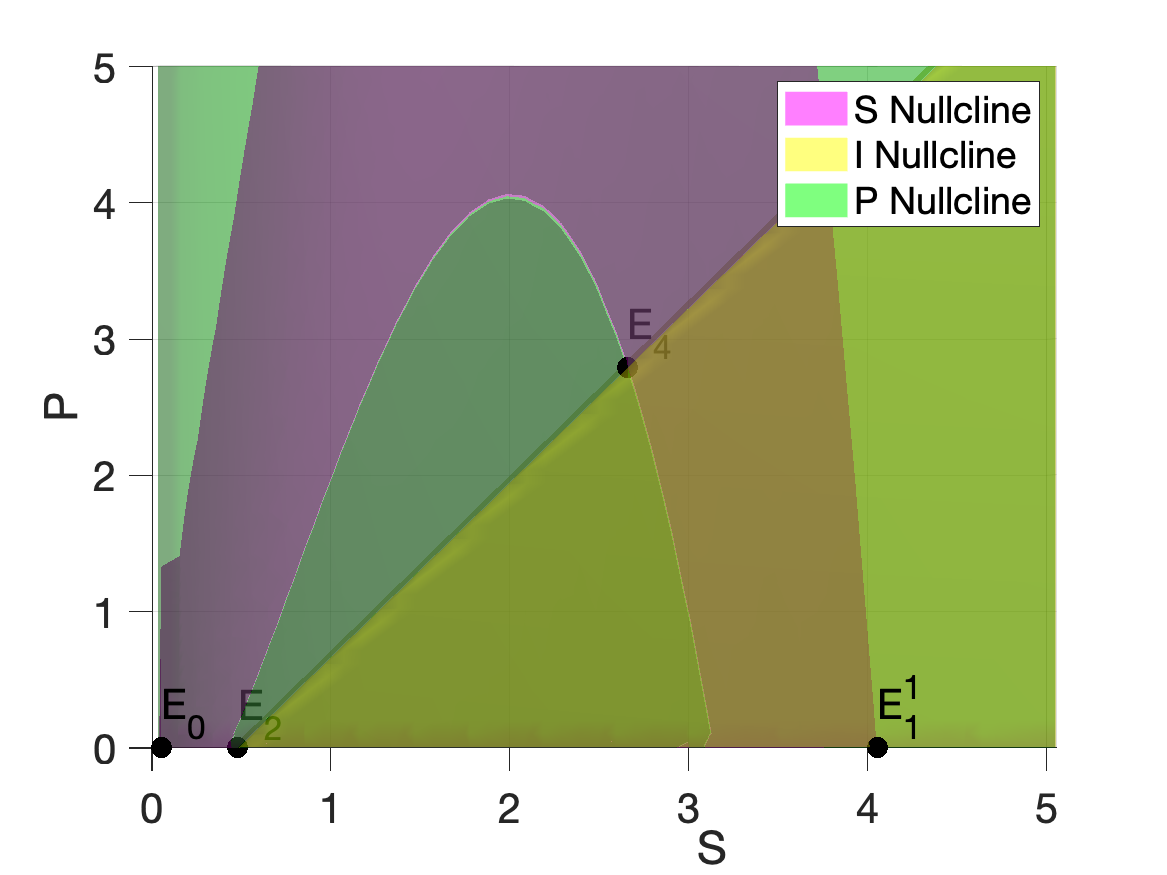}}
\subfigure[]{
    \includegraphics[width=6.15cm, height=5cm]{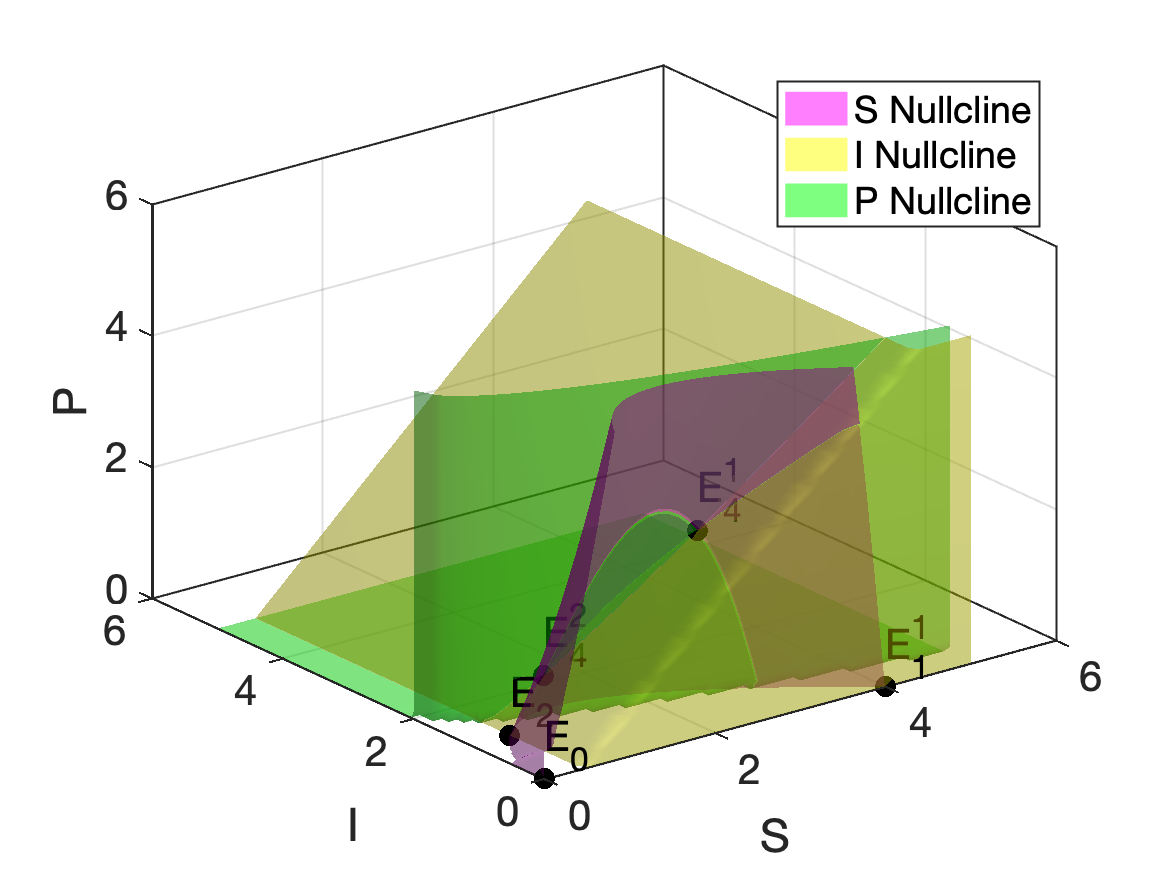}}
\subfigure[]{    
    \includegraphics[width=6.15cm, height=5cm]{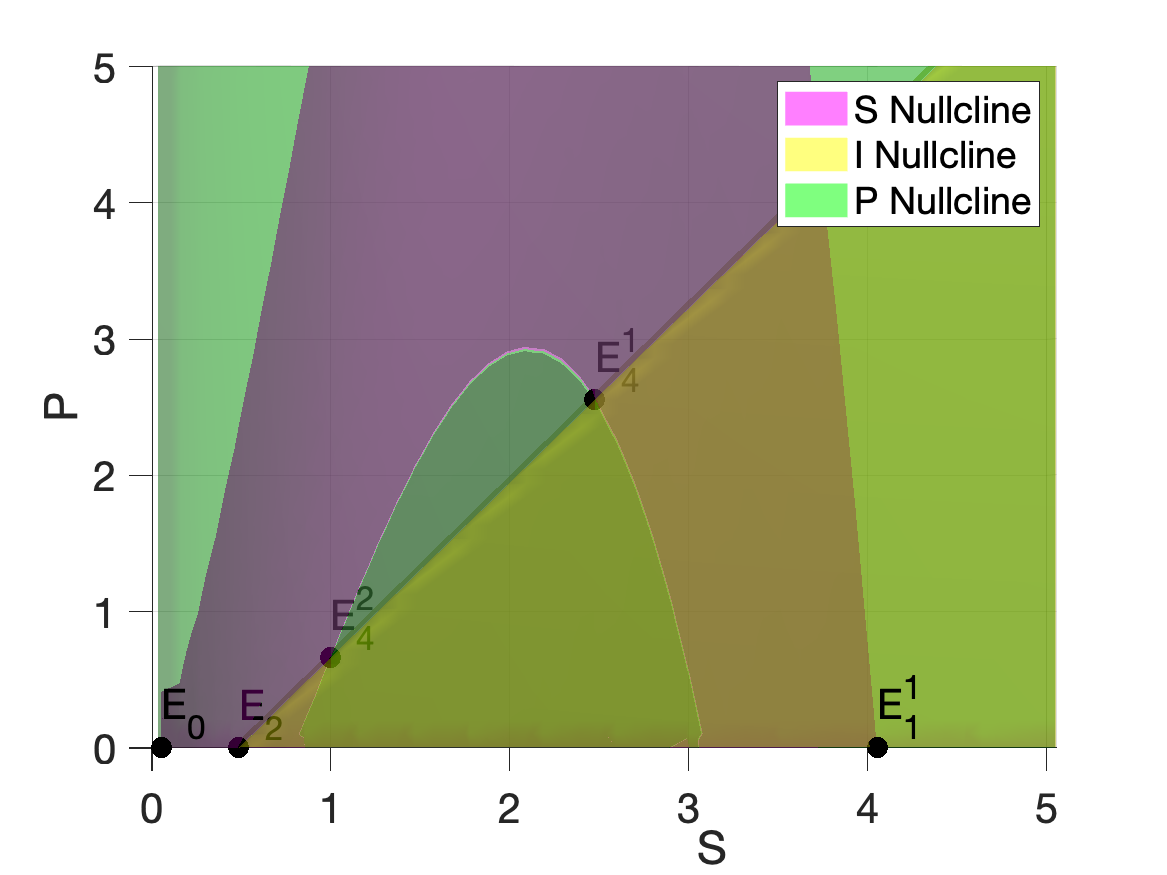}}          
\end{center}
\caption{Intersection of nullclines for varying values of $L$. (a) 3D plot of the nullclines when $L=-0.5$, showing one interior equilibrium point. (b) 2D projection of the S-P nullclines corresponding to (a). (c) 3D plot of the nullclines when $L=-0.1$. (d) 2D projection of the S-P nullclines corresponding to (c). All other parameters are fixed and given as $a_0=3,~a_1=0.4,~a_2=0.8,~r=0.5,~d_0=0.4,~d_1=0.7,~d_2=0.3,~d_3=0.4,~e_0=0.9,~K=4$}
\label{fig:equilibriumPoints}
\end{figure}

\subsection{Local stability analysis}
\noindent The Jacobian matrix for model \eqref{Mainsystem} at any equilibrium point can be expressed in the following form, 
\[
\mathbb{J}=
\begin{bmatrix}
 J_{11}& 
     J_{12} & 
    J_{13}\\[1ex] 
  \ J_{21} & 
     J_{22} & 
     J_{23} \\[1ex]
  J_{31} & 
    J_{32} & 
     J_{33}
\end{bmatrix}
\]
where
\begin{align*}
   J_{11} &=\frac{a_0 \left(-K L+2 K S+2 L S+L I-3 S^2-2 S I\right)}{K}-\frac{d_0 \left(rPS^r+S\right)}{S}-e_0 I\\
     J_{12}&= -\frac{S \left(a_0 (S-L)+e_0 K\right)}{K}\\
     J_{13} &= -d_0 S^r<0\\  
   J_{21} &= e_0 I>0\\
    J_{22} &= -a_1 -d_1P+e_0 S\\
    J_{23} &=-d_1 I<0\\
   J_{31} &= rd_2S^{r-1}P>0\\
    J_{32}&= d_3P>0\\
   J_{33} &= -a_2 +d_2S^r+d_3I
\end{align*}

\begin{remark}\label{remark: E_0}
The extinction equilibrium point $E_0$ cannot be analyzed using the Jacobian matrix linearization method because some terms become singular at the origin. Therefore, we will not include the analysis of $E_0$ in this study.
\end{remark}

\begin{theorem}
The locally asymptotic stability of the equilibrium point $E^i_1$, is established under the conditions: $\frac{a_0 \left(-K L+2 K S_1^i+2 L S_1^i-3 (S_1^{i})^2\right)}{K}<d_0,~ e_0 S_1^i <a_1$ and $a_2> d_2 (S_1^{i})^r$, where $i=1,2$. 
\end{theorem}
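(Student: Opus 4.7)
The plan is to exploit the fact that at any equilibrium on the $S$-axis, the off-diagonal entries coupling $S$ to $I$ and $P$ from below vanish, which renders the Jacobian upper triangular and reduces the stability question to three sign conditions on its diagonal entries.

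First I would substitute $I=0$ and $P=0$ into the general Jacobian $\mathbb{J}$ computed just before the theorem. The entries $J_{21}=e_0 I$, $J_{23}=-d_1 I$, $J_{31}=rd_2 S^{r-1}P$ and $J_{32}=d_3 P$ are all proportional to either $I$ or $P$, so they vanish at $E_1^i=(S_1^i,0,0)$. This gives
\[
\mathbb{J}(E_1^i)=\begin{pmatrix} J_{11}(E_1^i) & J_{12}(E_1^i) & J_{13}(E_1^i) \\ 0 & J_{22}(E_1^i) & 0 \\ 0 & 0 & J_{33}(E_1^i) \end{pmatrix},
\]
an upper-triangular matrix whose spectrum is its diagonal. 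Hence the three eigenvalues are simply $J_{11}(E_1^i)$, $J_{22}(E_1^i)$, and $J_{33}(E_1^i)$.

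Next I would read off each diagonal entry at $E_1^i$. Using the formulas from the paper with $I=P=0$, $J_{11}$ collapses to $\frac{a_0(-KL+2KS_1^i+2LS_1^i-3(S_1^i)^2)}{K}-d_0$, while $J_{22}$ becomes $-a_1+e_0 S_1^i$ and $J_{33}$ becomes $-a_2+d_2(S_1^i)^r$. By the Hartman-Grobman theorem, local asymptotic stability of the hyperbolic equilibrium $E_1^i$ follows as soon as all three eigenvalues are strictly negative. The inequalities $J_{11}(E_1^i)<0$, $J_{22}(E_1^i)<0$ and $J_{33}(E_1^i)<0$ translate term-for-term into the three hypotheses stated in the theorem, and the argument is structurally identical for $i=1$ (where $S_1^1=K$) and $i=2$ (where $S_1^2=L$); one merely plugs in the appropriate value of $S_1^i$.

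There is no real analytical obstacle here since the Jacobian decouples; the only subtlety worth checking, rather than a genuine difficulty, is that the nonsmooth term $S^r$ with $r\in(0,1)$ makes $\mathbb{J}$ ill-defined at $S=0$. This is automatic at $E_1^1=(K,0,0)$ because $K>0$; for $E_1^2=(L,0,0)$ one implicitly needs $L>0$ (i.e.\ the strong Allee regime, in which $E_1^2$ lies in the biologically meaningful region), otherwise $E_1^2$ is not a valid equilibrium to linearize around and the statement is vacuous.
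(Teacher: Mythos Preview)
Your argument is correct and mirrors the paper's proof essentially line for line: both evaluate the general Jacobian at $(S_1^i,0,0)$, observe that it becomes upper triangular because $J_{21},J_{23},J_{31},J_{32}$ vanish when $I=P=0$, and then read off the three diagonal eigenvalues to obtain the stated inequalities. Your closing caveat that $E_1^2=(L,0,0)$ only makes sense to linearize around when $L>0$ is a useful observation the paper does not spell out.
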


\begin{proof}
The Jacobian matrix evaluated at $E^i_1$ is given by
\[\mathbb{J}_{E^i_1}=
\begin{bmatrix}
\frac{a_0 \left(-K L+2 K S_1^i+2 L S_1^i-3 (S_1^{i})^2\right)}{K}-d_0& 
   -\frac{S_1^i \left(a_0 (S_1^i-L)+e_0 K\right)}{K}& 
   d_0 (S_1^{i})^r\\[1ex] 
    0 & 
   e_0 S_1^i -a_1 & 
    0 \\[1ex]
    0 & 
    0 & 
    d_2 (S_1^{i})^r-a_2
\end{bmatrix}\]

\noindent The eigenvalues obtained from the $\mathbb{J}_{E^i_1}$ are $\lambda_1= \frac{a_0 \left(-K L+2 K S_1^i+2 L S_1^i-3 (S_1^{i})^2\right)}{K}-d_0,~\lambda_2= e_0 S_1^i -a_1$ and $\lambda_3= d_2 (S_1^{i})^r-a_2$. The equilibrium point $E^i_1$ will be locally asymptotically stable if all the eigenvalues are negative or have negative real parts. Thus, the model \eqref{Mainsystem} at $E^i_1$ is locally asymptotically stable if $\frac{a_0 \left(-K L+2 K S_1^i+2 L S_1^i-3 (S_1^{i})^2\right)}{K}<d_0,~ e_0 S_1^i <a_1$ and $a_2> d_2 (S_1^{i})^r$. Note that $S^1_1=K$ and $S^2_1=L$.
\end{proof}

\begin{theorem}\label{thm:stable E_2} 
The local asymptotic stability of the predator-free equilibrium $E_2$ is established when the condition  $C_{11}<0,~C_{33}<0$, and $C_{12}C_{21}<0$ are satisfied.
\end{theorem}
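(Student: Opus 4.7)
My plan is to show that the Jacobian at $E_2$ has a block-triangular structure which decouples one eigenvalue from the remaining two, so that the stability reduces to checking sign conditions on a $2\times 2$ block together with a single scalar entry.

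First I would substitute $(S_2, I_2, 0)$ into the general Jacobian $\mathbb{J}$ displayed just before the theorem. Because $P = 0$ at $E_2$, both entries $J_{31} = r d_2 S^{r-1} P$ and $J_{32} = d_3 P$ vanish. Because $S_2 = a_1/e_0$, one also gets $J_{22}|_{E_2} = -a_1 - d_1 \cdot 0 + e_0 S_2 = 0$. Writing $C_{ij} := J_{ij}|_{E_2}$, the evaluated Jacobian then takes the form
\[
\mathbb{J}_{E_2}=
\begin{bmatrix}
C_{11} & C_{12} & -d_0 S_2^{\,r} \\[0.5ex]
e_0 I_2 & 0 & -d_1 I_2 \\[0.5ex]
0 & 0 & C_{33}
\end{bmatrix},
\qquad C_{33} = -a_2 + d_2 S_2^{\,r} + d_3 I_2 .
\]

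Next I would exploit the block-triangular structure: since the bottom row has two leading zeros, the characteristic polynomial factors as $(\lambda - C_{33})\,q(\lambda)$, where $q(\lambda)$ is the characteristic polynomial of the upper-left $2\times 2$ block $\begin{bmatrix} C_{11} & C_{12}\\ C_{21} & 0\end{bmatrix}$ with $C_{21} = e_0 I_2$. Expanding gives
\[
q(\lambda) = \lambda^{2} - C_{11}\lambda - C_{12} C_{21}.
\]
Thus one eigenvalue is $\lambda_3 = C_{33}$, and the other two are the roots of $q$.

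Applying the Routh--Hurwitz criterion (equivalently Vieta's formulas) to $q$, both of its roots have negative real parts if and only if the sum of the roots is negative and the product is positive, that is $C_{11} < 0$ and $-C_{12} C_{21} > 0$, i.e.\ $C_{12} C_{21} < 0$. Combining with the direct condition $\lambda_3 = C_{33} < 0$, all three eigenvalues of $\mathbb{J}_{E_2}$ have negative real parts, which by the Hartman--Grobman theorem gives local asymptotic stability of $E_2$.

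The calculation is entirely routine; the only real subtlety is recognising the simplification $C_{22} = 0$ at $E_2$ (a consequence of $S_2 = a_1/e_0$), since without it the reduced $2\times 2$ Routh--Hurwitz conditions would look slightly different and the stated product condition $C_{12}C_{21}<0$ would be replaced by $C_{11}C_{22} - C_{12}C_{21}>0$. I would therefore be careful to record this cancellation explicitly before invoking Routh--Hurwitz, as it is what produces the clean form of the hypotheses in the theorem.
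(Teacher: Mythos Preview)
Your proposal is correct and follows essentially the same route as the paper: evaluate $\mathbb{J}$ at $E_2$, observe the block-triangular structure (with $C_{22}=0$ because $S_2=a_1/e_0$ and $J_{31}=J_{32}=0$ because $P=0$), read off $\lambda_3=C_{33}$, and apply the trace/determinant (Routh--Hurwitz) conditions to the remaining $2\times 2$ block to obtain $C_{11}<0$ and $C_{12}C_{21}<0$. Your explicit remark on why $C_{22}$ vanishes is a nice addition that the paper leaves implicit.
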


\begin{proof}
The Jacobian matrix evaluated at $E_2$ is given by
\begin{equation}\label{JacE2}
\mathbb{J}_{E_2}=
\begin{bmatrix}
C_{11} & 
   C_{12}& 
   C_{13}\\[1ex] 
   C_{21} & 
   0 & 
  C_{23}\\[1ex]
    0 & 
    0 & 
    C_{33}
\end{bmatrix}
\end{equation}
where
\begin{align*}
C_{11}&=\frac{a_0 \left(-K L+2 K S_2+2 L S_2+L I_2-3 S_2^2-2 S_2 I_2\right)}{K}-d_0-e_0 I_2 \\
C_{12}&=-\frac{S_2 \left(a_0 (S_2-L)+e_0 K\right)}{K} \\
C_{13}&= -d_0 S_2^r<0\\
C_{21}&= e_0 I_2>0\\
C_{23}&= -d_1 I_2<0 \\
C_{33}&= -a_2 +d_2S_2^r+d_3I_2
\end{align*}
The eigenvalues of the Jacobian matrix at $E_2$ are $\lambda_1=C_{33}$ and the roots of the characteristic polynomial given by
\begin{equation}\label{charac:s2}
\lambda^2-C_{11}\lambda-C_{12}C_{21}=0.
\end{equation}
For this characteristic polynomial, the roots are $\lambda_2+\lambda_3=C_{11}$ and  $\lambda_2\lambda_3=-C_{12}C_{21}$. Hence, $E_2$ is locally asymptotically stable if $C_{11}<0,~C_{33}<0$, and $C_{12}C_{21}<0$.
\end{proof}

\begin{theorem}\label{thm:Stable E_3}
The local asymptotic stability of the infectious prey-free equilibrium $E_3$ is established when the conditions $H_{22} < 0$ and $H_{11} < 0$ are satisfied. 
\end{theorem}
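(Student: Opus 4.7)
The plan is to evaluate the Jacobian $\mathbb{J}$ at $E_3 = (S_3, 0, P_3)$, exploit the fact that two entries simplify dramatically because $I_3 = 0$ and $S_3 = (a_2/d_2)^{1/r}$, and then read off the eigenvalues using the resulting block structure. First I would substitute $I_3 = 0$ into the general formulas for $J_{ij}$. This immediately gives $J_{21} = e_0 I_3 = 0$ and $J_{23} = -d_1 I_3 = 0$, so the second row of $\mathbb{J}_{E_3}$ has the form $(0,\,H_{22},\,0)$ with $H_{22} = -a_1 - d_1 P_3 + e_0 S_3$. Moreover, since $d_2 S_3^r = a_2$, the $(3,3)$ entry $J_{33} = -a_2 + d_2 S_3^r + d_3 I_3$ vanishes. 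So the Jacobian reduces to the sparse form
\[
\mathbb{J}_{E_3}=
\begin{bmatrix}
H_{11} & H_{12} & H_{13} \\
0 & H_{22} & 0 \\
H_{31} & H_{32} & 0
\end{bmatrix},
\]
with $H_{11}, H_{12}$ inherited from $J_{11}, J_{12}$ at $(S_3,0,P_3)$, and $H_{13}=-d_0 S_3^{r}<0$, $H_{31}=rd_2 S_3^{r-1}P_3>0$, $H_{32}=d_3P_3>0$.

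Next I would expand the characteristic polynomial along the second row. Because that row contains only the single nonzero entry $H_{22}$ on the diagonal, $\det(\mathbb{J}_{E_3} - \lambda \mathbb{I})$ factors as $(H_{22}-\lambda)$ times the $2\times 2$ minor obtained by deleting row and column $2$. Hence one eigenvalue is $\lambda_1 = H_{22}$, and the other two are roots of
\[
\lambda^2 - H_{11}\lambda - H_{13}H_{31} = 0,
\]
which is the characteristic polynomial of the $(S,P)$-block $\begin{bmatrix} H_{11} & H_{13} \\ H_{31} & 0 \end{bmatrix}$.

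To conclude local asymptotic stability, I would apply the Routh--Hurwitz criterion to this quadratic: both roots have negative real parts iff the coefficient of $\lambda$ is positive and the constant term is positive, i.e. $-H_{11}>0$ and $-H_{13}H_{31}>0$. The second inequality is free: the sign analysis of $J_{13}$ and $J_{31}$ already recorded earlier in the excerpt gives $H_{13}<0$ and $H_{31}>0$, hence $-H_{13}H_{31}>0$ automatically, so no extra hypothesis is needed on that product. Combining the two remaining requirements $H_{11}<0$ and $\lambda_1 = H_{22}<0$ yields the sufficient conditions stated in the theorem.

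I do not expect a genuine obstacle here; the proof is essentially structural. The only subtle point worth being careful about is verifying that the determinant condition for the $2\times 2$ block is truly automatic rather than an additional assumption — this must be justified by citing the sign information $H_{13}<0$, $H_{31}>0$ already established in the Jacobian entries — and confirming that $J_{11}$ evaluated at $E_3$ yields a well-defined quantity $H_{11}$ despite the $S^{r-1}$ factor, which is fine since $S_3>0$.
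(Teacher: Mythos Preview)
Your proposal is correct and follows essentially the same approach as the paper: evaluate the Jacobian at $E_3$, use the vanishing second-row off-diagonal entries to factor off $\lambda_1=H_{22}$, and then analyze the remaining $2\times 2$ block via the trace/determinant (Routh--Hurwitz) conditions, noting that $-H_{13}H_{31}>0$ is automatic from the recorded signs. Your write-up is in fact slightly more explicit than the paper's about why the determinant condition requires no extra hypothesis.
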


\begin{proof}
The Jacobian matrix evaluated at $E_3$ is given by
\begin{equation}\label{JacE3}
\mathbb{J}_{E_3}=
\begin{bmatrix}
 H_{11}& 
    H_{12}
     & H_{13}\\[1ex] 
  0 & 
  H_{22} & 
    0\\[1ex]
H_{31} & 
   H_{32} & 
   0
\end{bmatrix}
\end{equation}
where
\begin{align*}
H_{11}&=\frac{a_0 \left(-K L+2 K S_3+2 L S_3-3 S_3^2\right)}{K}-\frac{d_0 \left(rP_3S_3^r+S_3\right)}{S_3}\\
H_{12}&=-\frac{S_3 \left(a_0 (S_3-L)+e_0 K\right)}{K}\\
H_{13}&=-d_0S_3^r <0\\
H_{22}&=  -a_1 -d_1P_3+e_0 S_3\\
H_{31}&=rd_2S_3^{r-1}P_3>0\\
H_{32}&= d_3P_3>0\\
H_{33}&= 0
\end{align*}
The eigenvalues of the Jacobian matrix at $E_3$ are $\lambda_1=H_{22}$ and the roots of the characteristic polynomial given by
 \begin{equation}\label{charac:s3}
 \lambda^2-H_{11}\lambda-H_{13}B_{31}=0.
 \end{equation}
For the characteristic polynomial in \eqref{charac:s3}, the roots are $\lambda_2+\lambda_3=H_{11}$ and  $\lambda_2\lambda_3=-H_{13}H_{31}>0$. Thus, $E_3$ is locally asymptotically stable if $H_{22}<0$ and $H_{11}<0$.
\end{proof}

\begin{theorem}\label{thm:coexistence}
The local asymptotic stability of the coexistence equilibrium $E_4$ is determined by the conditions $\Omega_1 > 0$, $\Omega_3 > 0$, and $\Omega_1\Omega_2 > \Omega_3$. 
\end{theorem}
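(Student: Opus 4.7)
The plan is to establish local asymptotic stability of $E_4$ via the Routh--Hurwitz criterion applied to the characteristic polynomial of the Jacobian evaluated at the coexistence point. First I would write down $\mathbb{J}_{E_4}$ using the general entries $J_{ij}$ given above, with $(S,I,P)=(S_4,I_4,P_4)$. A useful simplification at the outset: from the nullcline equation \eqref{pred=P*} the bottom-right entry collapses, namely $J_{33}=-a_2+d_2S_4^{r}+d_3I_4=0$, so the trace of $\mathbb{J}_{E_4}$ reduces to $J_{11}+J_{22}$. The signs of the remaining off-diagonal entries ($J_{13}<0$, $J_{21}>0$, $J_{23}<0$, $J_{31}>0$, $J_{32}>0$) are already inherited from the definitions in Section \ref{sec:equilibria and stability}, while $J_{11}$, $J_{12}$, and $J_{22}$ are assumed only to take whatever real values are forced by $E_4$.

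Next I would form the characteristic polynomial
\begin{equation*}
\det(\lambda \mathbb{I}-\mathbb{J}_{E_4}) \;=\; \lambda^{3}+\Omega_{1}\lambda^{2}+\Omega_{2}\lambda+\Omega_{3},
\end{equation*}
where the coefficients are read off as
\begin{align*}
\Omega_{1} &= -\mathrm{tr}(\mathbb{J}_{E_4}) \;=\; -(J_{11}+J_{22}), \\
\Omega_{2} &= (J_{11}J_{22}-J_{12}J_{21})+(J_{22}J_{33}-J_{23}J_{32})+(J_{11}J_{33}-J_{13}J_{31}), \\
\Omega_{3} &= -\det(\mathbb{J}_{E_4}),
\end{align*}
with $J_{33}=0$ simplifying $\Omega_2$ to $(J_{11}J_{22}-J_{12}J_{21})-J_{23}J_{32}-J_{13}J_{31}$. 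I would then invoke the standard Routh--Hurwitz conditions for a cubic: all three roots of the characteristic polynomial have negative real part if and only if $\Omega_{1}>0$, $\Omega_{3}>0$, and $\Omega_{1}\Omega_{2}-\Omega_{3}>0$. Assuming exactly these three hypotheses, every eigenvalue of $\mathbb{J}_{E_4}$ has negative real part, which by the Hartman--Grobman theorem yields local asymptotic stability of $E_4$.

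The main obstacle is essentially bookkeeping rather than conceptual. Because the coexistence coordinates $(S_4,I_4,P_4)$ are defined only implicitly through \eqref{coexistence S*}--\eqref{eqn:quad-4-interior} together with the first nullcline, explicit expansion of $\Omega_{1},\Omega_{2},\Omega_{3}$ in terms of the original parameters is unwieldy; the cleanest route is to keep the conclusion in terms of the Jacobian entries $J_{ij}$ and the derived quantities $\Omega_{i}$ exactly as the statement does, using $J_{33}=0$ at $E_4$ as the only algebraic simplification. I would also remark that the hypotheses $\Omega_{1}>0$ and $\Omega_{1}\Omega_{2}>\Omega_{3}>0$ automatically force $\Omega_{2}>0$, so the three listed inequalities constitute a complete set of Routh--Hurwitz conditions, and no additional sign assumption on $\Omega_2$ is required.
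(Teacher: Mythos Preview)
Your approach is correct and essentially identical to the paper's: linearize at $E_4$, write the cubic characteristic polynomial $\lambda^3+\Omega_1\lambda^2+\Omega_2\lambda+\Omega_3$, and invoke Routh--Hurwitz. The only point you missed is that the infected-prey nullcline \eqref{inf=I*} also forces $J_{22}=-a_1-d_1P_4+e_0S_4=0$ at $E_4$, so the paper's Jacobian has \emph{two} zero diagonal entries and the coefficients simplify further to $\Omega_1=-J_{11}$, $\Omega_2=-(J_{12}J_{21}+J_{23}J_{32}+J_{13}J_{31})$, $\Omega_3=J_{11}J_{23}J_{32}-J_{12}J_{23}J_{31}-J_{13}J_{21}J_{32}$; this is pure bookkeeping and does not affect the validity of your argument.
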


\begin{proof}
The Jacobian matrix evaluated at $E_4$ is given by
\[
\mathbb{J}_{E_4}=
\begin{bmatrix}
 F_{11}& 
    F_{12}
     & F_{13}\\[1ex] 
  F_{21} & 
   0 & 
    F_{23}\\[1ex]
F_{31} & 
   F_{32} & 
  0
\end{bmatrix}
\]
where
\begin{align*}
 	F_{11} &=\frac{a_0 \left(-K L+2 K S^*+2 L S^*+L I^*-3 S^{*2}-2 S^* I^*\right)}{K}-\frac{d_0 \left(rP^*S^{*r}+S^*\right)}{S^*}-e_0 I^*\\
     F_{12}&= -\frac{S^* \left(a_0 (S^*-L)+e_0 K\right)}{K}\\
     F_{13} &=  -d_0S^{*r}\\  
   F_{21} &= e_0I^*\\
   F_{22} &= 0\\
    F_{23} &=-d_1I^*\\
   F_{31} &= rd_2S^{*r-1}P^*\\
    F_{32}&= d_3P^*\\
   F_{33} &= 0
\end{align*}

\noindent The characteristic equation of the Jacobian matrix $\mathbb{J}_{E_4}$ around the coexistence equilibrium point $E_4$ is given by
\begin{equation}\label{eqn:character of E_4}
\lambda^3+\Omega_1\lambda^2+\Omega_2\lambda+\Omega_3=0
\end{equation}

where 
\begin{align*}
\Omega_1&=-F_{11}\\
\Omega_2&=-\left(F_{23}F_{32}+F_{12}F_{21}+F_{13}F_{31}\right)\\
\Omega_3&=F_{11}F_{23}F_{32}-F_{12}F_{23}F_{31}-F_{13}F_{21}F_{32}  
\end{align*}

\noindent From Routh–Hurwitz criteria, $E_4$ is locally asymptotically stable provided $\Omega_1 > 0$, $\Omega_3 > 0$, and $\Omega_1\Omega_2 > \Omega_3$, since all the eigenvalues of the characteristic equation \eqref{eqn:character of E_4} are negative or will have negative real parts.
\end{proof}

\section{Bifurcation Analysis}\label{sec:bifucation analysis}
\subsection{Co-dimension one bifurcation}
As a system parameter traverses a critical threshold, co-dimension one bifurcations manifest as significant alterations in the system's dynamics. These bifurcations involve the creation, merging, or annihilation of equilibrium points and/or periodic orbits, thus reshaping the system's phase portrait.
%

%
\subsubsection{Transcritical bifurcation}
In a transcritical bifurcation, the stability of two equilibrium points is interchanged.  As a parameter passes through a critical value, the previously stable equilibrium becomes unstable, and the previously unstable equilibrium becomes stable.  Importantly, both equilibria are present on either side of the bifurcation point.

\begin{theorem}\label{thm:transcriticalk1}
The model \eqref{Mainsystem} undergoes transcritical bifurcation around the predator free equilibrium $E_2$ at critical parameter value $L=L^{TC}$, where $L^{TC}$ is computed when $C_{12}|_{L=L^{TC}}C_{21}=0$. The values of $C_{ij}$ for $i,j=1,2$ can be found in equation \eqref{JacE2}.
\end{theorem}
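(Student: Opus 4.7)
The natural tool is Sotomayor's theorem for transcritical bifurcation, taking $L$ as the bifurcation parameter and analyzing the predator-free equilibrium $E_2$. First, I would identify $L^{TC}$ as the value for which $\mathbb{J}_{E_2}$ in \eqref{JacE2} acquires a simple zero eigenvalue. From the factored characteristic polynomial \eqref{charac:s2}, the two roots of the quadratic factor satisfy $\lambda_2\lambda_3 = -C_{12}C_{21}$, so a zero eigenvalue arises precisely when $C_{12}|_{L=L^{TC}} C_{21} = 0$. I would verify that the remaining eigenvalues $C_{11}$ and $C_{33}$ are nonzero at this critical value, so that the zero eigenvalue is simple.

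Next, I would construct the right null eigenvector $V$ satisfying $\mathbb{J}_{E_2}V = \mathbf{0}$ and the left null eigenvector $W$ satisfying $W^T\mathbb{J}_{E_2} = \mathbf{0}$, exploiting the block-triangular structure of \eqref{JacE2} to reduce each to a small linear system. With $V$ and $W$ in hand, the plan is to verify the three transversality conditions of Sotomayor:
\begin{align*}
W^T F_L(E_2; L^{TC}) &= 0, \\
W^T \bigl[\, DF_L(E_2; L^{TC})\,V \,\bigr] &\neq 0, \\
W^T \bigl[\, D^2 F(E_2; L^{TC})(V, V) \,\bigr] &\neq 0,
\end{align*}
where $F = (W_1, W_2, W_3)^T$ and $F_L = \partial F/\partial L = (-a_0 S(1-(S+I)/K),\, 0,\, 0)^T$. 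The first condition reflects the smooth persistence of $E_2$ as $L$ varies: implicit differentiation of $F(E_2(L); L) \equiv 0$ places $F_L$ in the range of $DF$, which is precisely the orthogonal complement of $W$.

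The main obstacle is the second-order nondegeneracy check $W^T[D^2F(V,V)] \neq 0$. The cubic Allee term $a_0 S(1-(S+I)/K)(S-L)$ and the fractional-power aggregation term $d_0 S^r P$ together generate numerous second partial derivatives; moreover, $S^r$ fails to be smooth at $S=0$, so the evaluation must be understood at the strictly positive value $S_2 = a_1/e_0$. Two structural features soften this difficulty: at $E_2$ the identity $P = 0$ annihilates every second-derivative contribution in which $P$ appears unpaired, and the null eigenvector $V$ is expected to be supported on a single coordinate axis, which collapses the symmetric bilinear form $D^2 F(V, V)$ to a single second partial derivative whose nonvanishing is straightforward to check. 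Once all three transversality conditions are confirmed, Sotomayor's theorem delivers the asserted transcritical bifurcation at $L = L^{TC}$.
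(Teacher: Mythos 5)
Your strategy is the same one the paper uses: locate $L^{TC}$ from the zero eigenvalue of \eqref{JacE2}, build left and right null vectors, and verify Sotomayor's three conditions. The problem is that the step you defer as ``straightforward to check'' actually fails. Since $C_{21}=e_0I_2>0$, the condition $C_{12}C_{21}=0$ forces $C_{12}=0$, and then the entire second column of $\mathbb{J}_{E_2}$ vanishes, so the right null vector is $V=(0,1,0)^T$ --- supported, as you predicted, on a single axis, namely the $I$-axis. But every component of the vector field is affine in $I$ (the only $I$-dependent terms are $-a_0SI(S-L)/K$, $-e_0SI$, $I(-a_1+e_0S-d_1P)$, and $d_3IP$), so $D^2F(E_2,L^{TC})(V,V)=\partial^2F/\partial I^2=0$ identically, and the nondegeneracy condition $W^T\bigl[D^2F(V,V)\bigr]\neq 0$ cannot hold. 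No generic transcritical bifurcation can be certified along this kernel direction without a further argument (a center-manifold reduction picking up the mixed partials, for instance).

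There is a second, more basic obstruction: $C_{12}=-S_2\bigl(a_0(S_2-L)+e_0K\bigr)/K$, and $a_0(S_2-L)+e_0K$ is, up to the positive factor $e_0^2$, exactly the denominator of $I_2$. Hence at the value of $L$ where $C_{12}=0$ the equilibrium $E_2$ ceases to exist (its $I$-component diverges), so there is no equilibrium at which to apply Sotomayor's theorem. The paper's own proof shares your structure and simply asserts the three conditions without computation (its displayed null vectors do not in fact annihilate $\mathbb{J}_{E_2}$ unless $C_{33}=0$), so your proposal reproduces the paper's argument together with its gap. Note that the transcritical bifurcation the paper reports numerically at $L^{TC}=-0.4312$, with equilibrium $(0.4444,1.5,0)$, occurs where $C_{33}=-a_2+d_2S_2^r+d_3I_2=0$ (predator invasion), not where $C_{12}=0$; taking $C_{33}=0$ as the critical condition would give a null vector with a nontrivial $P$-component and a quadratic form with a genuine chance of being nonzero.
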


\begin{proof}
Analysis of the Jacobian matrix \eqref{JacE2} of system \eqref{Mainsystem} at the equilibrium point $E_2$ reveals that $C_{33}$ is an eigenvalue.  The remaining eigenvalues are solutions to the quadratic equation \eqref{charac:s2}.  A necessary condition for the existence of a zero eigenvalue is given by:
\[
 C_{12} C_{21} = 0
\]
is satisfied, which corresponds to \( L = L^{TC} \).

We proceed by identifying the eigenvectors \( X_1 \) and \( X_2 \) corresponding to the zero eigenvalue of the matrices \( J_{E_2}^{TC} \) and \( (J_{E_2}^{TC})^T \), respectively. These eigenvectors are given by:
\[
X_1 = \left(-\frac{C_{23}}{C_{11}}\gamma_1, 0, \gamma_1\right)^T, \quad X_2 = \left(-\frac{C_{21}}{C_{11}}\gamma_2, \gamma_2, 0\right)^T,
\]
where \( \gamma_1 \) and \( \gamma_2 \) are arbitrary real numbers.Additionally, the following conditions hold:
\begin{itemize}
    \item \( X_2^{T} W_{L}\left(E_2, L^{TC}\right) = 0 \),
    \item \( X_2^{T} \left[ DW_{L}\left(E_2, L^{TC}\right) X_1 \right] \neq 0 \),
    \item \( X_2^{T} \left[ D^2 W\left(E_2, L^{TC}\right)(X_1, X_1) \right] \neq 0 \).
\end{itemize}

\noindent These conditions confirm that model \eqref{Mainsystem} undergoes a transcritical bifurcation at the equilibrium point \( E_2 \) when \( L = L^{TC} \).

\end{proof}

\subsubsection{Hopf bifurcation}
A Hopf bifurcation is a critical point in the behavior of a dynamical system (often described by differential equations) where a change in a system parameter causes a shift in the system's stability.

\begin{theorem}\label{thm:hopfk2}
The model \eqref{Mainsystem} undergoes a Hopf bifurcation near the coexistence equilibrium $E_4$ at critical parameter value  $L=L^{H}$, subject to the  following conditions:
		\begin{eqnarray}\label{c1}
		\Omega_{1}(L^H)>0, ~\Omega_{3}(L^H)>0, \quad \Omega_{1}(L^H)\Omega_{2}(L^H) =\Omega_{3}(L^H)
		\end{eqnarray}
		and
		\begin{eqnarray}\label{c2}
		\left[\Omega_{1}(L)\Omega_{2}(L)\right]^{\prime}_{L = L^H} \neq \Omega_{3}^{\prime}(L^H).
		\label{eq:}
		\end{eqnarray}
\end{theorem}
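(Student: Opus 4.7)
The plan is to verify the two standard conditions for a Hopf bifurcation at $E_4$: the existence of a simple pair of purely imaginary eigenvalues (with the remaining eigenvalue having nonzero real part) at $L=L^H$, and the transversality condition on the real part of the complex-conjugate pair. Both will be extracted directly from the cubic characteristic equation \eqref{eqn:character of E_4} obtained in the proof of Theorem \ref{thm:coexistence}.

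First, I would use condition \eqref{c1}. Under $\Omega_1(L^H)\Omega_2(L^H)=\Omega_3(L^H)$ together with $\Omega_1(L^H)>0$ and $\Omega_3(L^H)>0$ (which forces $\Omega_2(L^H)>0$), the characteristic polynomial factors at $L=L^H$ as
\begin{equation*}
\lambda^3+\Omega_1\lambda^2+\Omega_2\lambda+\Omega_3 = (\lambda+\Omega_1(L^H))\bigl(\lambda^2+\Omega_2(L^H)\bigr).
\end{equation*}
Hence the spectrum of $\mathbb{J}_{E_4}$ at $L=L^H$ consists of one real negative eigenvalue $\lambda_1=-\Omega_1(L^H)<0$ and a simple pair of purely imaginary eigenvalues $\lambda_{2,3}=\pm i\sqrt{\Omega_2(L^H)}$. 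This gives the nonhyperbolic spectral picture required by the Hopf bifurcation theorem.

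Next, I would verify the transversality condition. Let $\lambda(L)=\alpha(L)+i\beta(L)$ denote the branch of complex eigenvalues that coincides with $i\sqrt{\Omega_2(L^H)}$ at $L=L^H$. Substituting into the characteristic equation and differentiating implicitly with respect to $L$ yields, after separating real and imaginary parts and evaluating at $L=L^H$ (where $\alpha=0$ and $\beta^2=\Omega_2(L^H)$), a linear system whose solution produces an expression of the form
\begin{equation*}
\alpha'(L^H) = \frac{\Omega_3'(L^H)-\bigl[\Omega_1(L)\Omega_2(L)\bigr]'_{L=L^H}}{2\bigl(\Omega_1^2(L^H)+\Omega_2(L^H)\bigr)}.
\end{equation*}
Since the denominator is strictly positive, condition \eqref{c2} is exactly $\alpha'(L^H)\neq 0$. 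Together with the simplicity of the purely imaginary pair and the absence of other eigenvalues on the imaginary axis (because $-\Omega_1(L^H)\neq 0$), this fulfills the hypotheses of the Hopf bifurcation theorem, so a family of small-amplitude periodic orbits bifurcates from $E_4$ as $L$ crosses $L^H$.

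The only mildly delicate step is the implicit-differentiation computation leading to the transversality formula: one must carefully cancel terms using the relation $\Omega_1(L^H)\Omega_2(L^H)=\Omega_3(L^H)$ before identifying the numerator as $\Omega_3'-(\Omega_1\Omega_2)'$. Everything else (the factorization, the sign of $\lambda_1$, and the simplicity of the imaginary pair) is immediate from \eqref{c1}. I would not attempt to compute the first Lyapunov coefficient analytically here, since the theorem only asserts existence of a Hopf bifurcation and leaves the sub/supercritical classification to be illustrated numerically in the sequel.
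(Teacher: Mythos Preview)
Your proposal is correct and follows essentially the same route as the paper: factor the cubic at $L=L^H$ via $\Omega_1\Omega_2=\Omega_3$ to exhibit the pair $\pm i\sqrt{\Omega_2}$ and the real eigenvalue $-\Omega_1<0$, then obtain the transversality formula $\alpha'(L^H)=\bigl(\Omega_3'-(\Omega_1\Omega_2)'\bigr)/\bigl(2(\Omega_1^2+\Omega_2)\bigr)$ by implicit differentiation of the characteristic equation. The paper's proof carries out the intermediate real/imaginary separation explicitly (introducing auxiliary quantities $Q_1,\dots,Q_4$), but the content and conclusion are identical to yours.
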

\begin{proof}
To detect the presence of a Hopf bifurcation at $L=L_H$ near the coexistence equilibrium point $E_4$, the characteristic equation should take on the following form:
		\begin{equation}\label{characeq}
		(\lambda^{2}(L^{H}) + \Omega_{2}(L^{H}))(\lambda(L^{H})+\Omega_{1}(L^{H}))=0 ,
		\end{equation}
		which has roots $\lambda_{1}(L^{H}) = i \sqrt{\Omega_{2}(L^{H})},$  $\lambda_{2}(L^{H}) = -i \sqrt{\Omega_{2}(L^{H})},$  $\lambda_{3}(L^{H}) = - \Omega_{1}(L^{H})<0,$ then, $\Omega_3(L^{H}) = \Omega_1(L^{H})\Omega_2(L^{H})$.
We shall verify the existence of the transversality condition:		
		\begin{equation}
		\Bigg[\dfrac{\text{d}(Re\lambda_{j}(L))}{\text{d}L}\Bigg]_{L=L^{H}}\not=0,\quad j=1,2.
		\end{equation} 
The following results is obtained by substituting $\lambda_{j}(L) = \alpha(L)+i\beta(L)$ into (\ref{characeq}) and subsequent differentiation with respect to $L$:
		
	\begin{eqnarray}
		Q_{1}(L)\alpha^{\prime}(L)-Q_{2}(L)\beta^{\prime}(L) +Q_{4}(L) &=& 0, \label{u1}\\
		Q_{2}(L)\alpha^{\prime}(L) + Q_{1}(L)\beta^{\prime}(L) + Q_{3}(L)&=&0,\label{u2}
		\end{eqnarray}
		where
		\begin{eqnarray*}
			Q_{1}(L)&=&3\alpha^{2}(L)-3\beta^{2}(L)+\Omega_{2}(L)+2\Omega_{1}(L)\alpha(L),\\
			Q_{2}(L)&=& 6\alpha(L)\beta(L)+2\Omega_{1}(L)\beta(L),\\
			Q_{3}(L)&=&2\alpha(L)\beta(L)\Omega_{1}^{\prime}(L)+\Omega_{2}^{\prime}(L)\beta(L),\\
			Q_{4}(L)&=&\Omega_{2}^{\prime}(L)\alpha(L)+\alpha^{2}(L)\Omega_{1}^{\prime}(L)-\beta^{2}(L)\Omega_{1}^{\prime}(L) + \Omega_{3}^{\prime}(L).
		\end{eqnarray*}
				
\noindent At $L=L^{H},$ $\alpha(L^{H})=0$ and $\beta(L^{H})=\sqrt{\Omega_{2}(L^{H})}$.  We obtain
\begin{eqnarray*}
			Q_{1}(L^{H}) &=& -2 \Omega_{2}(L^{H}),\\
			Q_{2}(L^{H}) &=& 2 \Omega_{1}(L^{H})\sqrt{\Omega_{2}(L^{H})},\\
			Q_{3}(L^{H}) &=& \Omega_{2}^{\prime}(L^{H})\sqrt{\Omega_{2}(L^{H})},\\
			Q_{4}(L^{H}) &=& \Omega_{3}^{\prime}(L^{H}) - \Omega_{2}(L^{H})\Omega_{1}^{\prime}(L^{H}).
		\end{eqnarray*}	
The following expression is derived by solving equations \eqref{u1} and \eqref{u2} for $\alpha^{\prime}(L^{H})$:
		\begin{eqnarray*}
			\left[\frac{\text{d}Re(\lambda_{j}(L))}{\text{d}L}\right]_{L=L^{H}}&=&\alpha^{\prime}(L^{H})\\&=&-\frac{Q_{4}(L^{H})Q_{1}(L^{H})+Q_{3}(L^{H})Q_{2}(L^{H})}{Q_{1}^{2}(L^{H})+Q_{2}^{2}(L^{H})}\\
			&=&\frac{\Omega_{3}^{\prime}(L^{H})-\Omega_{2}(L^{H})\Omega_{1}^{\prime}(L^{H})-\Omega_{1}(L^{H})\Omega_{2}^{\prime}(L^{H})}{2\left(\Omega_{2}(L^{H})+\Omega_{1}^{2}(L^{H})\right)}\not=0
		\end{eqnarray*}
				
\noindent on condition that
\begin{align*}
\left[\Omega_{1}(L)\Omega_{2}(L)\right]^{\prime}_{L = L^{H}} \neq \Omega_{3}^{\prime}(L^{H}).
\end{align*}

\noindent Therefore, the model \eqref{Mainsystem} undergoes a Hopf bifurcation near the coexistence equilibrium point $E_4$ when $L=L^H$.
\end{proof}

\subsubsection{Saddle-node bifurcation}
A saddle-node bifurcation happens when two equilibrium points—one stable and one unstable—come together and cancel each other out as a parameter passes a certain critical value. When this occurs, the stability of these points changes suddenly, leading to new patterns of behavior in the system. 

\begin{theorem}\label{thm:saddle-node}
The model \eqref{Mainsystem} exhibits  a saddle-node bifurcation near the point $E_4$ at the critical parameter value $L=L^{SN}$, provided $\Omega_3=0$.
\end{theorem}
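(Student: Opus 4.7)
The plan is to apply Sotomayor's theorem for saddle-node bifurcations to the vector field $W=(W_1,W_2,W_3)^T$ at the equilibrium $E_4$ with bifurcation parameter $L$. The hypothesis $\Omega_3=0$ is exactly the condition that $\det(\mathbb{J}_{E_4})=0$, since expanding the characteristic polynomial $\lambda^3+\Omega_1\lambda^2+\Omega_2\lambda+\Omega_3=0$ and evaluating at $\lambda=0$ gives $\Omega_3=\det(\mathbb{J}_{E_4})$ (up to sign). Thus $\mathbb{J}_{E_4}$ possesses a simple zero eigenvalue at $L=L^{SN}$, provided the other two roots are bounded away from the imaginary axis, which I will assume (consistent with the way the Hopf case was ruled out above).

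First, I would exhibit the right eigenvector $V=(v_1,v_2,v_3)^T$ and the left eigenvector $U=(u_1,u_2,u_3)^T$ associated with the zero eigenvalue of $\mathbb{J}_{E_4}$, i.e.\ solutions of $\mathbb{J}_{E_4}V=0$ and $U^T\mathbb{J}_{E_4}=0$. Using the entries $F_{ij}$ already recorded in the proof of Theorem~\ref{thm:coexistence}, one can read off
\begin{align*}
V&=\Bigl(\,1,\ \tfrac{F_{23}F_{31}-F_{21}F_{33}'}{F_{22}'F_{33}'-F_{23}F_{32}},\ \tfrac{F_{21}F_{32}-F_{22}'F_{31}}{F_{22}'F_{33}'-F_{23}F_{32}}\Bigr)^T,
\end{align*}
and analogously for $U$, where the primed entries indicate the (possibly zero) diagonal terms; in our case $F_{22}=F_{33}=0$, which simplifies these formulas considerably. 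The point is that $V$ and $U$ can be written explicitly in terms of the $F_{ij}$.

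Next, I would check Sotomayor's two transversality conditions:
\begin{align*}
(\mathrm{SN1})\quad & U^{T}\,W_{L}(E_4,L^{SN})\neq 0,\\
(\mathrm{SN2})\quad & U^{T}\bigl[D^{2}W(E_4,L^{SN})(V,V)\bigr]\neq 0.
\end{align*}
For (SN1), only $W_1$ depends on $L$ through the factor $(S-L)$, so
\begin{align*}
W_{L}(E_4,L^{SN})=\bigl(-a_0 S^{*}\bigl(1-\tfrac{S^{*}+I^{*}}{K}\bigr),\ 0,\ 0\bigr)^T,
\end{align*}
and (SN1) reduces to the nonvanishing of $u_1$ times that first component, which is generically nonzero because $S^{*}>0$ and $S^{*}+I^{*}\neq K$ at a biologically meaningful interior equilibrium. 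For (SN2), I would compute the Hessians of $W_1,W_2,W_3$ at $E_4$ and evaluate the bilinear form on $(V,V)$; the dominant contribution comes from the quadratic and cubic terms of the logistic-Allee factor in $W_1$ and the mass-action terms in $W_2,W_3$, giving a nonzero cubic polynomial in the components of $V$.

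The main obstacle will be the algebraic verification of (SN2): the entries $F_{ij}$ are intricate because of the $S^{r}$ nonlinearity and the product $(1-(S+I)/K)(S-L)$ in $W_1$, so writing $U^{T}D^{2}W(V,V)$ in closed form is messy. I would handle this by packaging the computation componentwise, using the explicit second partials $\partial^{2}W_1/\partial S^{2}$, $\partial^{2}W_1/\partial S\partial I$, $\partial^{2}W_1/\partial S\partial P$ (noting $\partial^{2}W_1/\partial P^2=r(r-1)d_0 S^{r-2}P\neq 0$ for $r\in(0,1)$), and the single nonzero second partials of $W_2$ and $W_3$ (which are $e_0$, $-d_1$, $rd_2 S^{r-1}$, and $d_3$ in the mixed terms), and then showing that the resulting expression does not vanish generically. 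Establishing these two nondegeneracy conditions together with the simple-zero-eigenvalue property then allows the direct invocation of Sotomayor's theorem to conclude that a saddle-node bifurcation occurs at $E_4$ when $L=L^{SN}$.
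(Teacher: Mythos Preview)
Your proposal is correct and follows essentially the same route as the paper: apply Sotomayor's theorem at $E_4$ using $\Omega_3=0$ to obtain a simple zero eigenvalue, construct the right and left null vectors of $\mathbb{J}_{E_4}$ from the $F_{ij}$, and verify the two nondegeneracy conditions, with $W_L(E_4,L^{SN})=\bigl(-a_0 S^{*}(1-(S^{*}+I^{*})/K),0,0\bigr)^T$ giving (SN1) and the Hessian computation giving (SN2). One small slip: $\partial^{2}W_1/\partial P^{2}=0$ since $W_1$ is linear in $P$; the $r(r-1)$ term you wrote actually arises in $\partial^{2}W_1/\partial S^{2}$ from the $-d_0 S^{r}P$ contribution.
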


\begin{proof}
The Jacobian matrix $\mathbb{J}_{E_4}$ has a zero eigenvalue when $\det(\mathbb{J}_{E_4}) = 0$, corresponding to the critical point $L^{SN}$. Let $U$ and $V$ be the eigenvectors associated with this zero eigenvalue, corresponding to the matrices $\mathbb{J}_{E_4}$ and $\mathbb{J}^T_{E_4}$, respectively. We then obtain:
\[
U = (\bar{u}_1, \bar{u}_2, \bar{u}_3)^T
\]
\[
V = (\bar{v}_1, \bar{v}_2, \bar{v}_3)^T,
\]
where the components are given by:
\[
\bar{u}_1 = \delta_1, \quad \bar{u}_2 = -\frac{F_{31}}{F_{32}}\delta_1, \quad \bar{u}_3 = \frac{F_{12}F_{31} - F_{11}F_{32}}{F_{13}F_{32}}\delta_1,
\]
\[
\bar{v}_1 = \delta_2, \quad \bar{v}_2 = -\frac{F_{13}}{F_{23}}\delta_2, \quad \bar{v}_3 = \frac{F_{21}F_{13} - F_{11}F_{23}}{F_{23}F_{31}}\delta_2.
\]

\noindent Here, $\delta_1$ and $\delta_2$ are arbitrary constants, and $F_{ij}$ for $i, j = 1, 2, 3$ are elements of the $\Omega_3$.

\noindent Let $W = (W_1, W_2, W_3)^T$, where $W_1, W_2$, and $W_3$ are defined in the model \eqref{Mainsystem}. We then compute:
\[
V^T W_{L}(E_4, L^{SN}) = (\bar{v}_1, \bar{v}_2, \bar{v}_3) \left( -a_0 S \left( 1 - \frac{S+I}{K} \right), 0, 0 \right)^T
\]
\[
= -a_0 \delta_2 S \left( 1 - \frac{S+I}{K} \right) \neq 0,
\]
and 
\[
V^T [D^2 W(E_4, L^{SN})(U, U)] \neq 0.
\]
\noindent Therefore, the model \eqref{Mainsystem} undergoes a saddle-node bifurcation at the point $E_4$ when the parameter $L$ reaches the critical value $L^{SN}$.
\end{proof}

\begin{remark}\label{remark:SNHTC}
Numerical analysis obtained with the help of MATCONT package \cite{G05} in MATLAB R2024b, as presented in Figure \ref{fig:one_param_birfucation}, reveals a range of one-parameter bifurcations in the system as the Allee threshold, $L$, is varied from $-1$ to $1$. We identify saddle-node, Hopf, and transcritical bifurcations in Figure \ref{fig:one_param_birfucation}(a). The saddle-node bifurcation is located at $L^{SN}= 0.2396$, with the equilibrium $(S, I, P) = (1.8642, 0.9760, 1.8254)$. The Hopf bifurcation occurs at $L^{H} = 0.2184$, with the equilibrium $(S, I, P) = (1.6746, 1.0295, 1.5817)$ and a first Lyapunov coefficient of $7.2293$. The transcritical bifurcation is observed in the weak Allee effect regime at $L^{TC}= -0.4312$, with the equilibrium $(S, I, P) = (0.4444, 1.5, 0)$. Figure \ref{fig:one_param_birfucation}(b) illustrates a transcritical bifurcation observed when varying the susceptible prey aggregation constant, $r$. This bifurcation occurs at $r^{TC} = 0.7641$, with the corresponding equilibrium point at $(S, I, P) = (3.6099, 0, 4.0698)$.
\end{remark}

\begin{figure}[hbt!]
\begin{center}
\subfigure[]{
    \includegraphics[width=6.15cm, height=6.15cm]{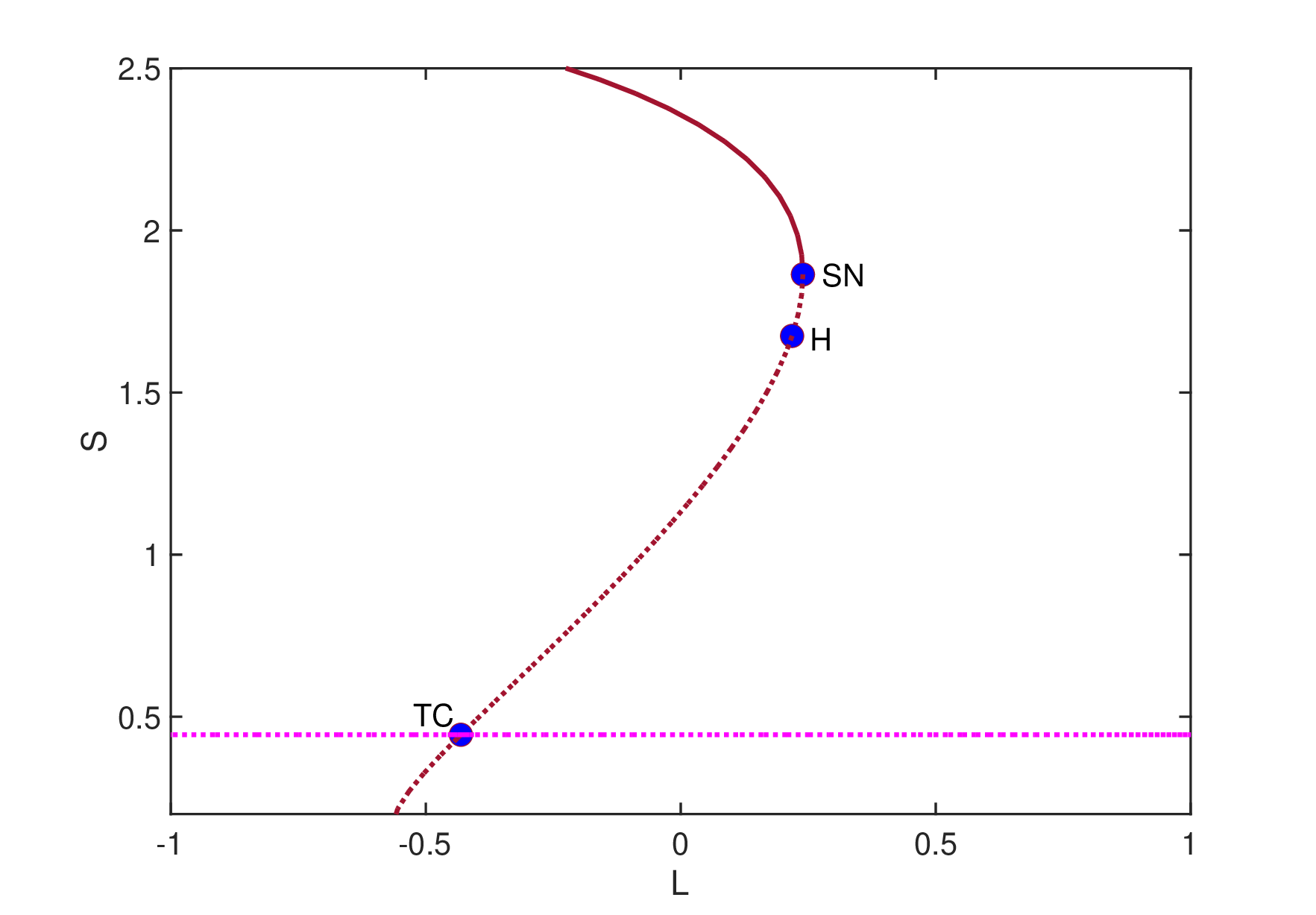}}
\subfigure[]{    
    \includegraphics[width=6.15cm, height=6.15cm]{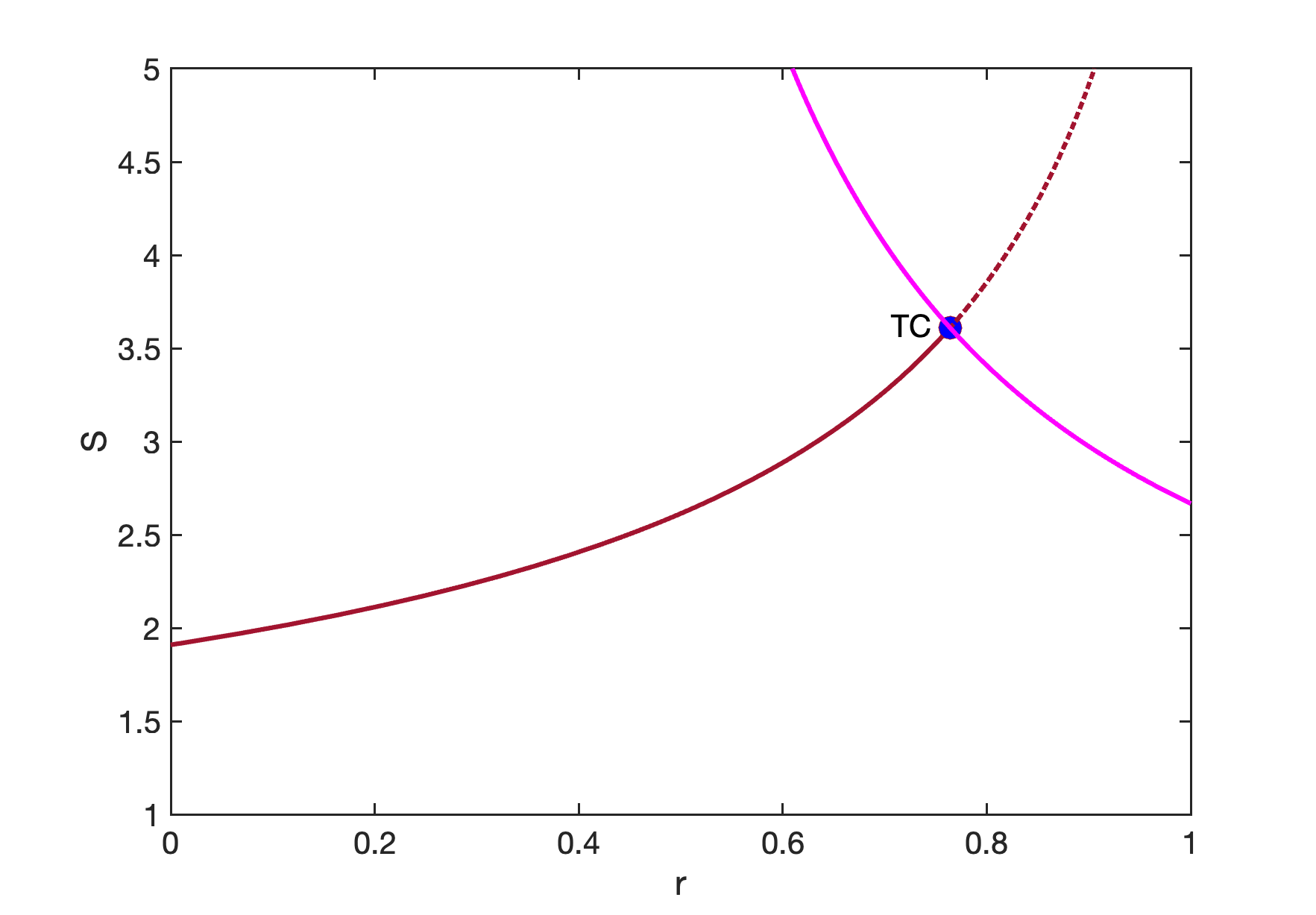}}
\end{center}
\caption{One parameter bifurcation plots illustrating the effects of varying the Allee threshold, and also the aggregation constant. (a) $L$ as a bifurcation parameter  (b) $r$ as a bifurcation parameter. All other parameters are fixed and given as $a_0=3,~a_1=0.4,~a_2=0.8,~r=0.5,~d_0=0.4,~d_1=0.7,~d_2=0.3,~d_3=0.4,~e_0=0.9,~K=4,~L=-0.5$. For clarity, the following notation is used in the figures: SN denotes a saddle-node bifurcation, TC denotes a transcritical bifurcation, and H denotes a Hopf bifurcation. Stable equilibria are represented by solid lines, and unstable equilibria are represented by dashed lines}
\label{fig:one_param_birfucation}
\end{figure}

\subsection{Co-dimension two bifurcation}
In this subsection, we examine the possibility of obtaining co-dimension two bifurcations in model \eqref{Mainsystem}, a feature that allows for more intricate dynamics than simpler models due to the simultaneous variation of two key parameters.

\subsubsection{Cusp bifurcation}
A cusp (CP) bifurcation is a co-dimension two local bifurcation where two saddle node points collide. 

Using the MATCONT package \cite{G05} in MATLAB R2024b  to perform a continuation of the saddle node bifurcation (i.e., $L = 0.2396$ at $(1.8642,0.9760,1.8254,1.5817)$) with $(L, e_0)$ as free parameters, we identify a cusp point ($CP_1$) at $L = 2.5747$, occurring at $(2.9654,0.0.7085,0)$ with $e_0 = 0.1349$. The corresponding eigenvalues are numerically computed as $\lambda_1 = 0$ and $\lambda_{2,3} = -0.0718 \pm 0.3407$. This two-parameter bifurcation is illustrated in Figure \ref{fig:two_param_birfucation}(b). We also identified $CP_2$ at  $L = 4.0147$, occurring at $(3.6883,0.5596,0)$ with $e_0 = 0.1085$. The corresponding eigenvalues are numerically computed as $\lambda_1 = 0,~\lambda_2=0.3144,$ and $\lambda_{3} = -0.0971$.

\subsubsection{Zero-Hopf Equilibrium}
In a 3-dimensional system of differential equations, a zero-Hopf (ZH) equilibrium occurs when the system has a special combination of eigenvalues: one eigenvalue is zero, and another pair of eigenvalues are purely imaginary.

Using the MATCONT package \cite{G05} in MATLAB R2024b  to perform a continuation of the Hopf bifurcation (i.e., $L = 0.2184$ at $(1.6746,1.0294,1.5817)$) with $(L, a_0)$ as free parameters, we identify a zero-Hopf point ($ZH$) at $L = -1.6111$, occurring at $(0.4444,1.5,0)$ with $a_0 = 1.2780$. The corresponding eigenvalues are numerically computed as $\lambda_1 = 0$ and $\lambda_{2,3} = \pm 0.9665i$. This two-parameter bifurcation is illustrated in Figure \ref{fig:two_param_birfucation}(a). Note that $ZH$ in Figure \ref{fig:two_param_birfucation}(b) is not biologically feasible since the predator population is negative (i.e., $P=-0.2104$), we will not consider it further.

\subsubsection{Generalized Hopf  bifurcation}
The generalized Hopf (GH) (or Bautin) bifurcation, a co-dimension two local bifurcation, is characterized by a zero first Lyapunov coefficient at a coexistence equilibrium with purely imaginary eigenvalues. This bifurcation serves as a boundary between subcritical and supercritical Hopf bifurcations in parameter space.

Using the MATCONT package \cite{G05} in MATLAB R2024b  to perform a continuation of the Hopf bifurcation (i.e., $L = 0.2184$ at $(1.6746,1.0294,1.5817)$) with $(L, a_0)$ as free parameters, we identify a generalized-Hopf point ($GH$) at $L = -1.6507$, occurring at $(0.5016,1.4688,0.0735)$ with $a_0 = 1.2485$. The corresponding eigenvalues are numerically computed as $\lambda_1 = 0.0015$ and $\lambda_{2,3} = \pm 1.0377i$. This two-parameter bifurcation is illustrated in Figure \ref{fig:two_param_birfucation}(a). The second Lyapunov coefficient is $-1.2394$.

\subsubsection{Bogdanov-Takens bifurcation}
The Bogdanov-Takens (BT) bifurcation describes a significant change in a system's behavior, triggered by the simultaneous variation of two parameters, at a point where the system's stability analysis reveals two zero eigenvalues.

Using the MATCONT package \cite{G05} in MATLAB R2024b  to perform a continuation of the saddle node bifurcation (i.e., $L = 0.2396$ at $(1.8642,0.9760,1.8254,1.5817)$) with $(L, e_0)$ as free parameters, we identify a Bogdanov-Takens point ($BT_2$) at $L = 4.4253$, occurring at $(3.8983,0.5192,0.3777)$ with $e_0 = 0.1704$. The corresponding eigenvalues are numerically computed as $\lambda_1 = 0.3588,~\lambda_2=0,$ and $\lambda_{3} = 0$. This two-parameter bifurcation is illustrated in Figure \ref{fig:two_param_birfucation}(b). Note that $BT_1$ is not biologically feasible since the predator population is negative (i.e., $P=-0.1683$), we will not consider it further.

\begin{figure}[hbt!]
\begin{center}
\subfigure[]{
    \includegraphics[width=6.15cm, height=6cm]{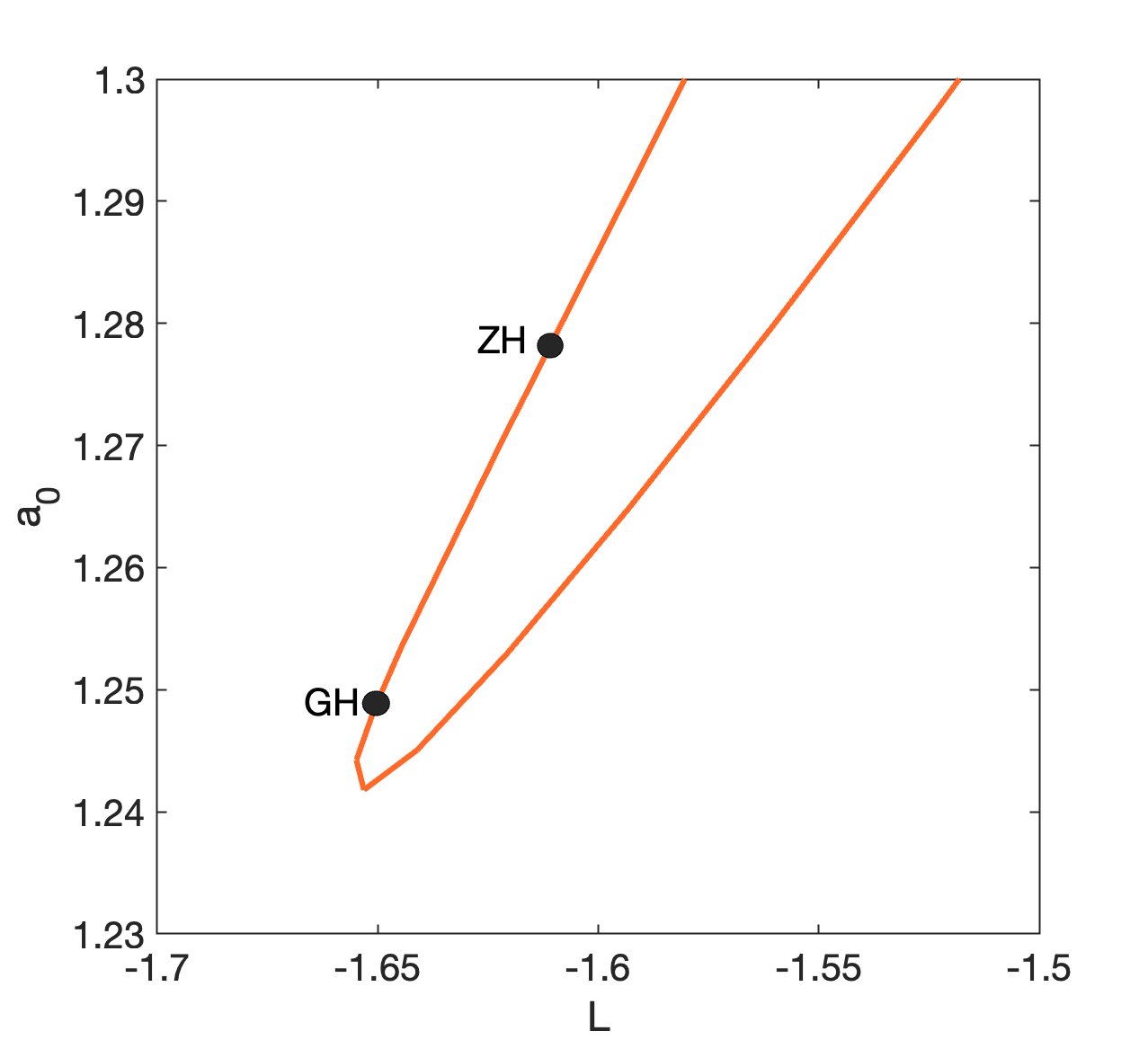}}
\subfigure[]{    
    \includegraphics[width=6.15cm, height=6cm]{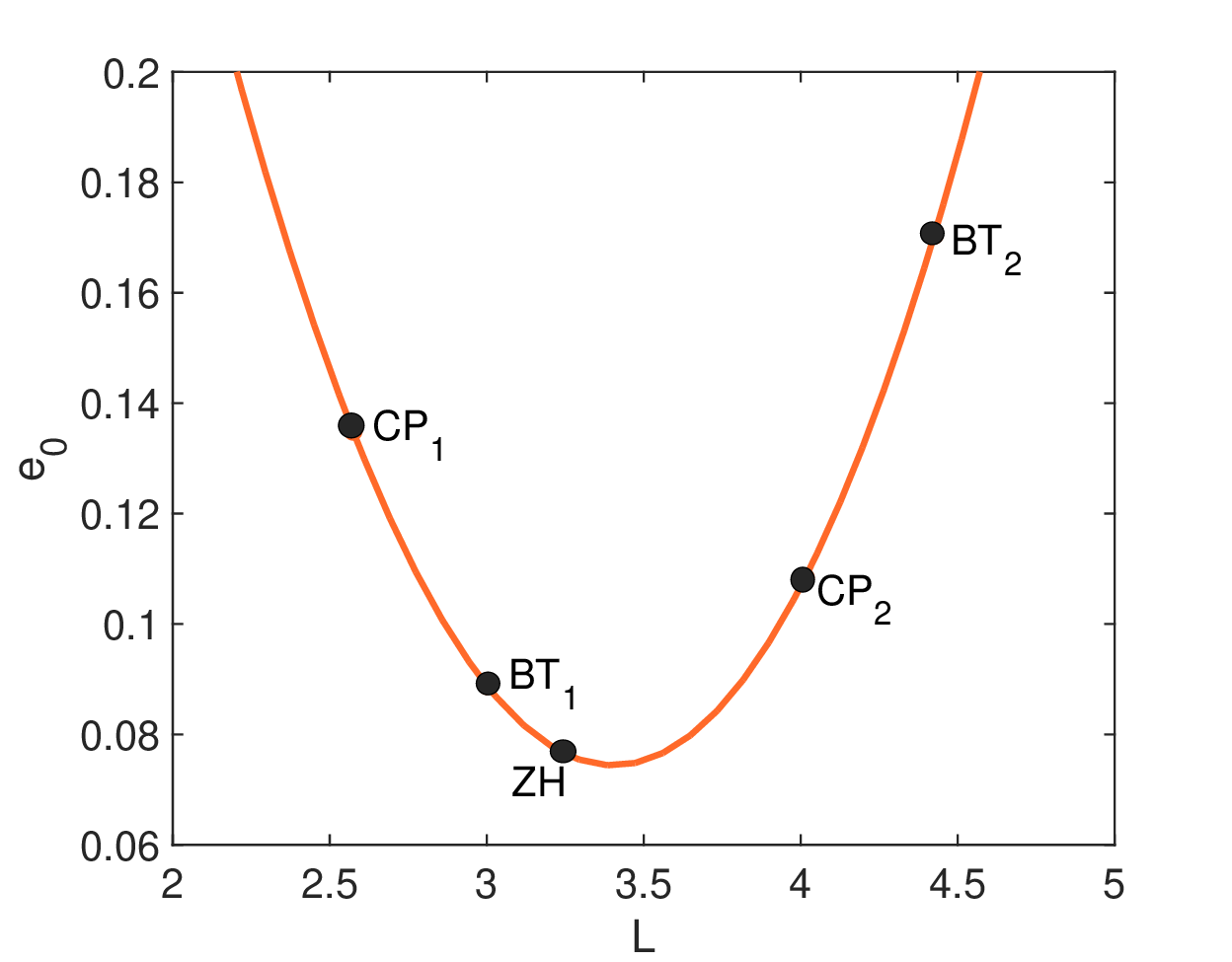}}
\end{center}
\caption{Two parameter bifurcation plots.  (a) $L-a_0$ parametric space depicting $GH$ and $ZH$ in the weak Allee regime  (b) $L-e_0$ parametric space depicting $CP_1,~CP_2,~BT_1,~BT_2,$ and $ZH$ in the strong Allee regime. All other parameters are fixed and given as $a_0=3,~a_1=0.4,~a_2=0.8,~r=0.5,~d_0=0.4,~d_1=0.7,~d_2=0.3,~d_3=0.4,~e_0=0.9,~K=4,~L=-0.5$}
\label{fig:two_param_birfucation}
\end{figure}

\subsection{Regions of parametric space: $L$ vs $r$}
In this subsection, we investigate how variations in the Allee threshold parameter ($L$) and the aggregation constant ($r$) influence the model's long-term behavior, leading to either a stable coexistence equilibrium or extinction equilibria. For the stable coexistence region (Region I), the infectious prey does not drive the susceptible prey to extinction, and the predator population is able to sustain itself. This occurs under conditions where the Allee effect is not too strong, and susceptible prey aggregation provides enough protection for a stable population structure. Additionally, the extinction region are classified into two distinct regions:
\begin{itemize}
\item[(i)] an infected prey-free region (Region II), where the infected prey ($I$) is eradicated while the susceptible prey ($S$) and predators ($P$) persist. Biologically, this could mean that infection cannot be sustained due to unfavorable conditions for disease transmission (e.g., low host density, high prey aggregation reducing contacts). This is an important ecological outcome because it suggests that under certain conditions, disease eradication is possible without collapsing the entire prey-predator system.
\item[(ii)] finite time extinction of the susceptible prey population region (Region III), ultimately leading to the extinction of both the infected prey and the predator population (or total collapse) . This happens when the Allee effect is strong, meaning that at low susceptible prey densities, the reproduction rate is too low to sustain the population. Once susceptible prey vanish, the infected prey (which depend on them for transmission) and the predators (which rely on them for food) also die out.
\end{itemize}
These distinct regions are illustrated in Figure \ref{fig:Allee vs aggregation}, which delineates the parameter regions associated with stable coexistence, disease eradication and total collapse.

\begin{figure}[hbt!]
\begin{center}
    \includegraphics[width=11cm, height=9cm]{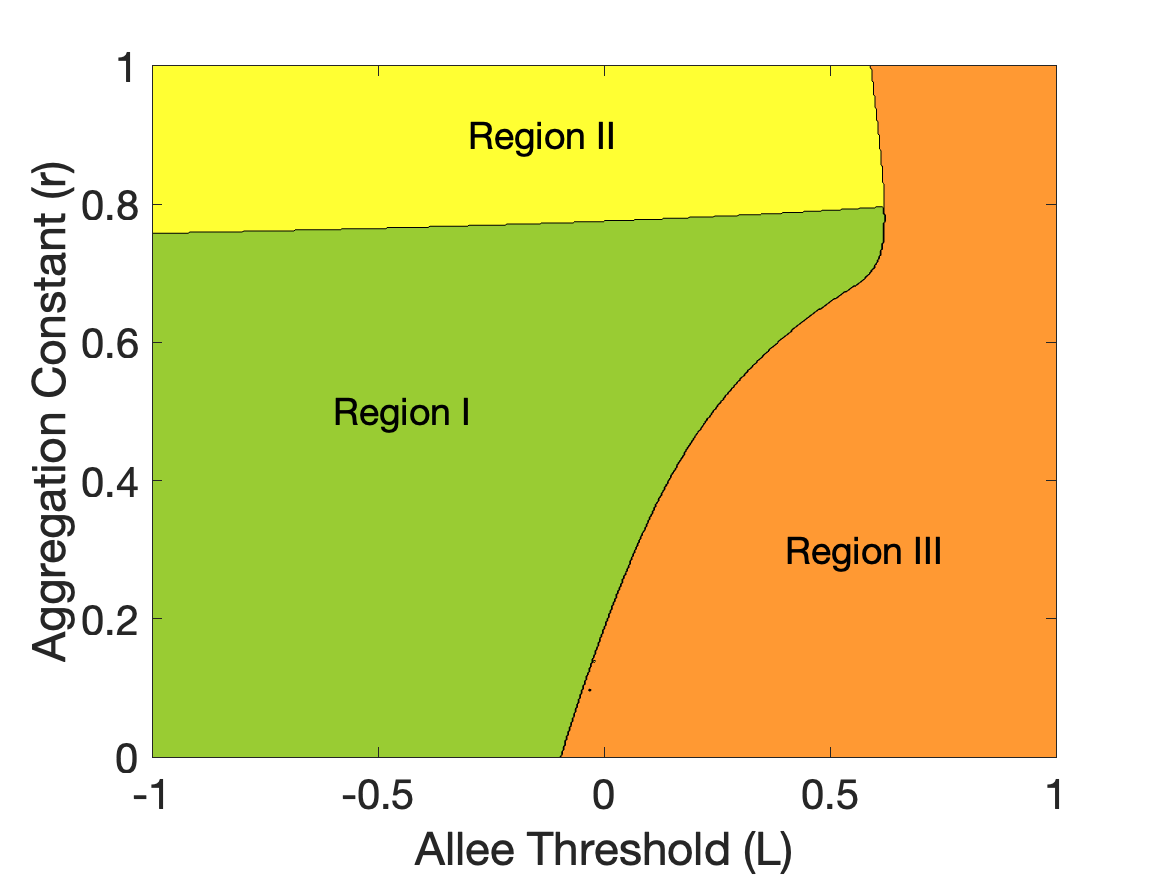}
\end{center}
\caption{Bifurcation diagram illustrating the effects of varying $L$ and $r$ on the population dynamics in model \eqref{Mainsystem}.  Stable coexistence region (green), infectious prey free region (yellow), and finite time extinction of susceptible prey region or total collapse (orange). The parameters are fixed as  $a_0=3,~a_1=0.4,~a_2=0.8,~d_0=0.4,~d_1=0.7,~d_2=0.3,~d_3=0.4,~e_0=0.9,~K=4$ with initial condition  $(S(0),I(0),P(0))=(2,1,3)$}
\label{fig:Allee vs aggregation}
\end{figure}

\section{Disease management by targeting susceptible prey aggregate}\label{sec: Disease managemment}
\noindent Our model reveals a critical threshold for disease persistence: by manipulating the aggregated constant of the susceptible prey under a weak Allee effect, we demonstrate the potential for complete elimination of infected prey, see Figure \ref{fig:timeseries:diseaseControl}.  From an ecological perspective, this suggests that a sufficiently low density of susceptible prey population can interrupt the chain of infection, preventing the disease from maintaining a foothold in the population. It underscores the importance of susceptible prey population availability in driving disease dynamics.

\begin{theorem}\label{thm:inf-prey extinction}
For the SIP model described by the equations in model \eqref{Mainsystem}, under a specific parameter set and initial data $(S(0), I(0), P(0))$ that converges uniformly to a stable coexistence state, there exists a critical threshold ($r^*$), i.e., $0<r<r^*<1$ such that the solution for the infectious prey population starting from the same initial data will eventually go extinct, i.e. $\displaystyle\lim_{t \to \infty} I(t) = 0$.
\end{theorem}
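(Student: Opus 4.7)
The plan is to identify $r^*$ with the transcritical bifurcation value at which the coexistence equilibrium $E_4$ collides with the infectious prey-free equilibrium $E_3$, mirroring the numerical observation $r^{TC} \approx 0.7641$ in Remark \ref{remark:SNHTC}. Fix the parameter set and initial data so that, for the nominal value $r_0 \in (0,1)$, the trajectory converges to $E_4(r_0)$. Using the interior nullcline identities \eqref{coexistence S*} and \eqref{eqn:quad-4-interior}, the branch $E_4(r)$ satisfies $I^*(r) = (a_2 - d_2 (S^*(r))^r)/d_3$ and $P^*(r) = (e_0 S^*(r) - a_1)/d_1$. Define $r^*$ as the largest $r \in (0, r_0)$ at which $I^*(r) = 0$, equivalently $S^*(r^*) = S_3(r^*) = (a_2/d_2)^{1/r^*}$ and $P^*(r^*) = P_3(r^*)$; existence of such $r^*$ follows from the intermediate value theorem applied to $I^*(r)$ across the sign change between $r_0$ (where $I^*(r_0) > 0$) and $r$ small enough (where $I^*(r) < 0$ for the chosen parameters).

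Next I would verify that $E_3(r)$ becomes locally asymptotically stable for $r$ immediately below $r^*$. Theorem \ref{thm:Stable E_3} demands $H_{22}(r) = -a_1 - d_1 P_3(r) + e_0 S_3(r) < 0$ and $H_{11}(r) < 0$. The merger condition $E_4(r^*) = E_3(r^*)$ forces $H_{22}(r^*) = 0$, because the $I$-nullcline $P = (e_0 S - a_1)/d_1$ passes through $(S_3, P_3)$ precisely there. Differentiating $H_{22}$ in $r$ and using the assumed stability of $E_4(r_0)$ shows that $H_{22}(r) < 0$ in a left neighborhood of $r^*$, while continuity of $H_{11}$ provides the second inequality. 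The Sotomayor transversality conditions, analogous to those verified in Theorem \ref{thm:transcriticalk1}, then produce the exchange of stability between $E_4$ and $E_3$ across $r = r^*$.

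The main obstacle, and the most delicate step, is to transfer the original basin of $E_4(r_0)$ into the basin of $E_3(r)$ for $r < r^*$. I would combine two ingredients. On the invariant face $\{I = 0\}$, the reduced $S$--$P$ subsystem has $E_3(r)$ as its unique positive equilibrium; a Dulac function, or a Lyapunov candidate of the form
\[
V(S, P) = \alpha\bigl[(S - S_3) - S_3 \log(S/S_3)\bigr] + \beta\bigl[(P - P_3) - P_3 \log(P/P_3)\bigr],
\]
with $\alpha, \beta > 0$ suitably chosen, should rule out limit cycles on the face and establish global attractivity of $E_3(r)$ in the open positive face. Complementing this, Thieme's acyclicity theorem applied to the boundary chain $\{E_0, E_1^1, E_1^2, E_3\}$ promotes face attractivity to a full three-dimensional neighborhood, so that trajectories whose initial data is close to the perturbed nominal trajectory satisfy $(S(t), P(t)) \to (S_3, P_3)$, whence $(\log I)^{\prime} = -a_1 + e_0 S - d_1 P \to H_{22}(r) < 0$ and $I(t) \to 0$ at exponential rate. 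The delicate point is justifying the uniform-in-$I$ closeness of $(S, P)$ to the face dynamics near $I = 0$, which I would handle either by a slow-manifold reduction exploiting the factor $I$ in $W_2$, or by a direct persistence estimate that bounds $I(t)$ above by a decaying supersolution $\bar I(t)$ satisfying $\bar I^{\prime} = \bar I (H_{22}(r) + o(1))$.
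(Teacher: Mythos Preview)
Your route is genuinely different from---and far more elaborate than---the paper's.  The paper's argument is essentially a two-line computation: it factors
\[
\frac{dI}{dt}=I\bigl(-a_1+e_0S-d_1P\bigr),
\]
then \emph{assumes} the system settles at the disease-free equilibrium $E_3=(S_3,0,P_3)$, substitutes $S=S_3$ and $P=P_3$ into the bracket, and records the resulting inequality $a_1>e_0S_3-d_1P_3$ (which is exactly $H_{22}<0$ from Theorem~\ref{thm:Stable E_3}).  After inserting the explicit formulas $S_3=(a_2/d_2)^{1/r}$ and the expression for $P_3$, it declares that this inequality in $r$ implicitly defines $r^{*}$.  There is no bifurcation-tracking, no Dulac or Lyapunov function on the $\{I=0\}$ face, and no appeal to Thieme-type persistence theory; the convergence to $E_3$ is taken for granted rather than proved.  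In effect both approaches hinge on the same scalar condition $H_{22}(r)<0$: you read it as the transcritical collision $E_4\!\to\!E_3$, while the paper simply writes it down as an invasion criterion.  Your programme, if carried out, would yield a genuinely rigorous statement (global attraction of $E_3$ for the given initial data), at the cost of substantial extra machinery; the paper's version is quick but heuristic, since it presupposes the very convergence it is meant to establish.

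One concrete point to revisit: you locate $r^{*}$ as the largest $r\in(0,r_0)$ with $I^{*}(r)=0$, i.e.\ below the nominal $r_0$, and then work on a left neighbourhood of $r^{*}$.  The paper's own numerics run the other way: with $r_0=0.5$ the transcritical occurs at $r^{TC}\approx 0.7641$ (Remark~\ref{remark:SNHTC}), and Figure~\ref{fig:timeseries:diseaseControl} shows coexistence at $r=0.5$ but extinction of $I$ at $r=0.8$.  So the relevant threshold lies \emph{above} $r_0$, and the sign of $H_{22}$ flips as $r$ increases through $r^{TC}$, not as it decreases.  Your intermediate-value step and the ``left neighbourhood'' stability argument should be reoriented accordingly before the rest of the outline can proceed.
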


\begin{proof}
To establish the extinction of the infectious prey population $I(t)$ under the model \eqref{Mainsystem}, we analyze the behavior of the differential equation governing $I$:

\begin{equation*}
\frac{dI}{dt} = -a_1 I + e_0 S I - d_1 I P.
\end{equation*}

\noindent Factoring out $I$, we obtain:

\begin{equation*}
\frac{dI}{dt} = (-a_1 + e_0 S - d_1 P) I.
\end{equation*}

\noindent For the population of infected prey to asymptotically approach extinction, it is necessary that

\begin{equation*}
\frac{dI}{dt} < 0 \quad \text{for sufficiently large } t.
\end{equation*}

\noindent Since $I(t) \geq 0$ for all $t$, this condition is satisfied if and only if:

\begin{equation} \label{eq:extinction_cond}
-a_1 + e_0 S - d_1 P < 0 \quad \Rightarrow \quad a_1 > e_0 S - d_1 P.
\end{equation}

\noindent The infectious prey-free equilibrium $E_3 = (S_3, 0, P_3)$ satisfies the model's equilibrium conditions, thus if the model stabilizes at $E_3,$ the infected prey population will disappear over time. Substituting $S = S_3$ and $P = P_3$ into \eqref{eq:extinction_cond}, the extinction criterion simplifies to:

\begin{equation} \label{eq:stability_condition}
a_1 > e_0 S_3 - d_1 P_3.
\end{equation}

\noindent Thus, we obtain the key inequality:

\begin{equation}
a_1 > e_0 \left(\frac{a_2}{d_2} \right)^{1/r} + \frac{d_1 a_0 \left(\left(\frac{a_2}{d_2} \right)^{1/r} \right)^{1-r} \left(\left(\frac{a_2}{d_2} \right)^{1/r} - K \right) \left(\left(\frac{a_2}{d_2} \right)^{1/r} - L \right)}{d_0 K}.
\end{equation}

\noindent The inequality above implicitly defines a critical threshold $r^*$, such that for all $0<r<r^*<1$, the inequality is satisfied, leading to the extinction of the infectious prey. This establishes that there exists a bifurcation threshold of the model that determines whether $I(t)$ persists or vanishes as $t \to \infty$.

Thus, under the given assumptions, we conclude that the infectious prey population $I(t)$ eventually go extinct.
\end{proof}

\begin{figure}[hbt!]
\begin{center}
\subfigure[]{
    \includegraphics[width=6.15cm, height=6.15cm]{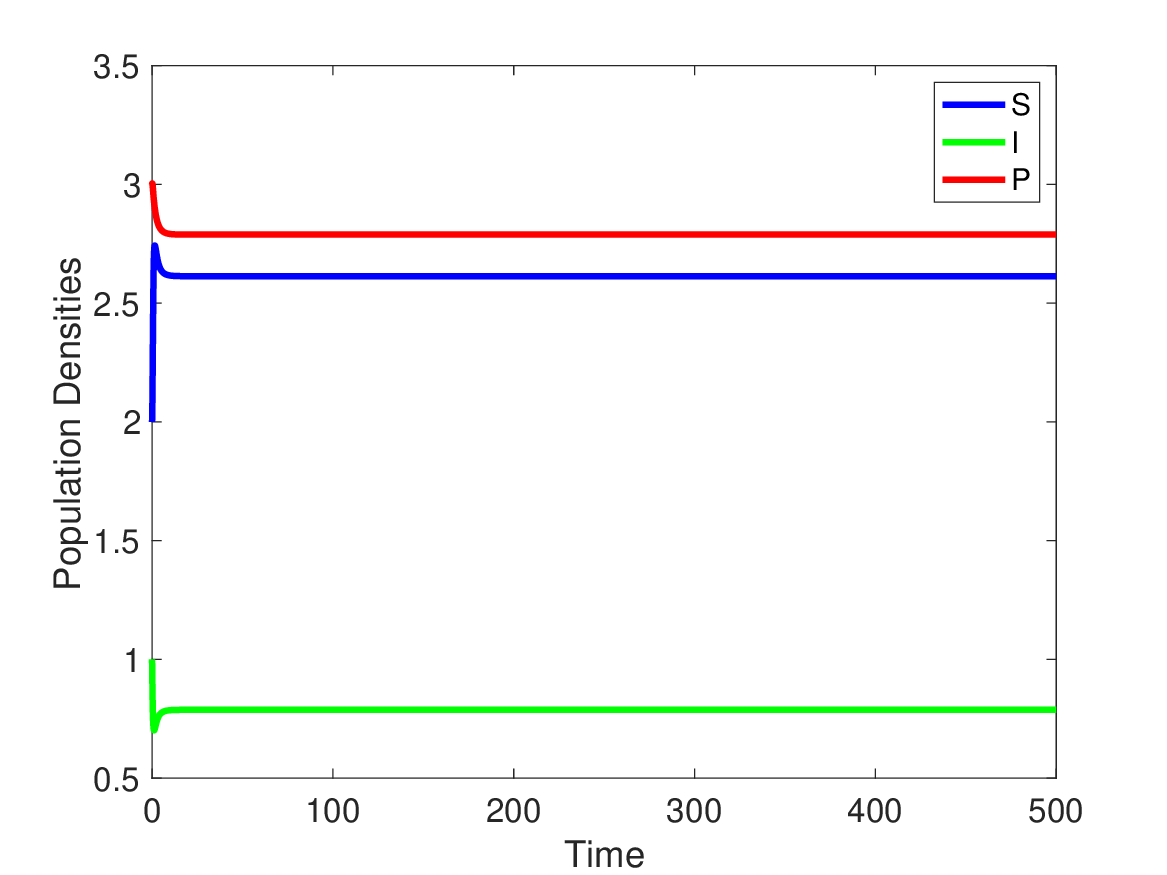}}
\subfigure[]{    
    \includegraphics[width=6.15cm, height=6.15cm]{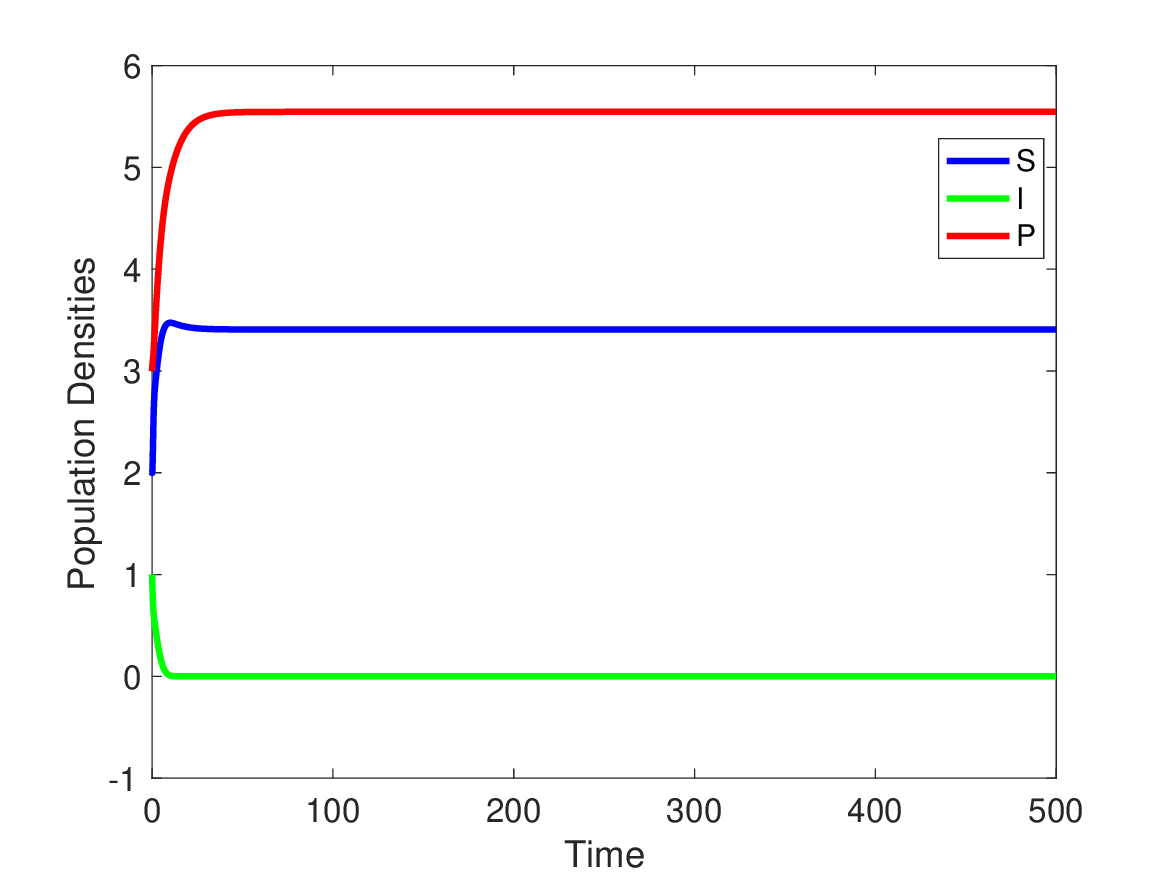}}
\subfigure[]{
    \includegraphics[width=6.15cm, height=6.15cm]{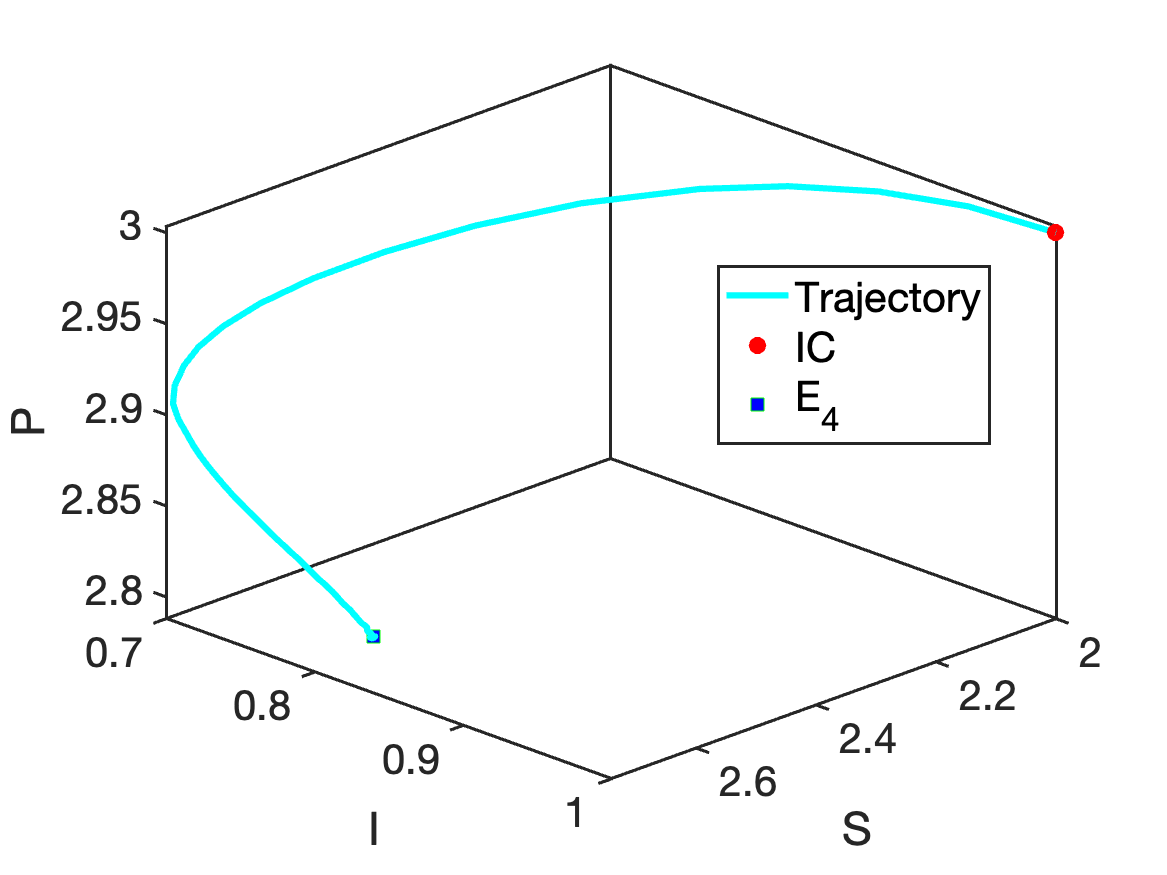}}
\subfigure[]{    
    \includegraphics[width=6.15cm, height=6.15cm]{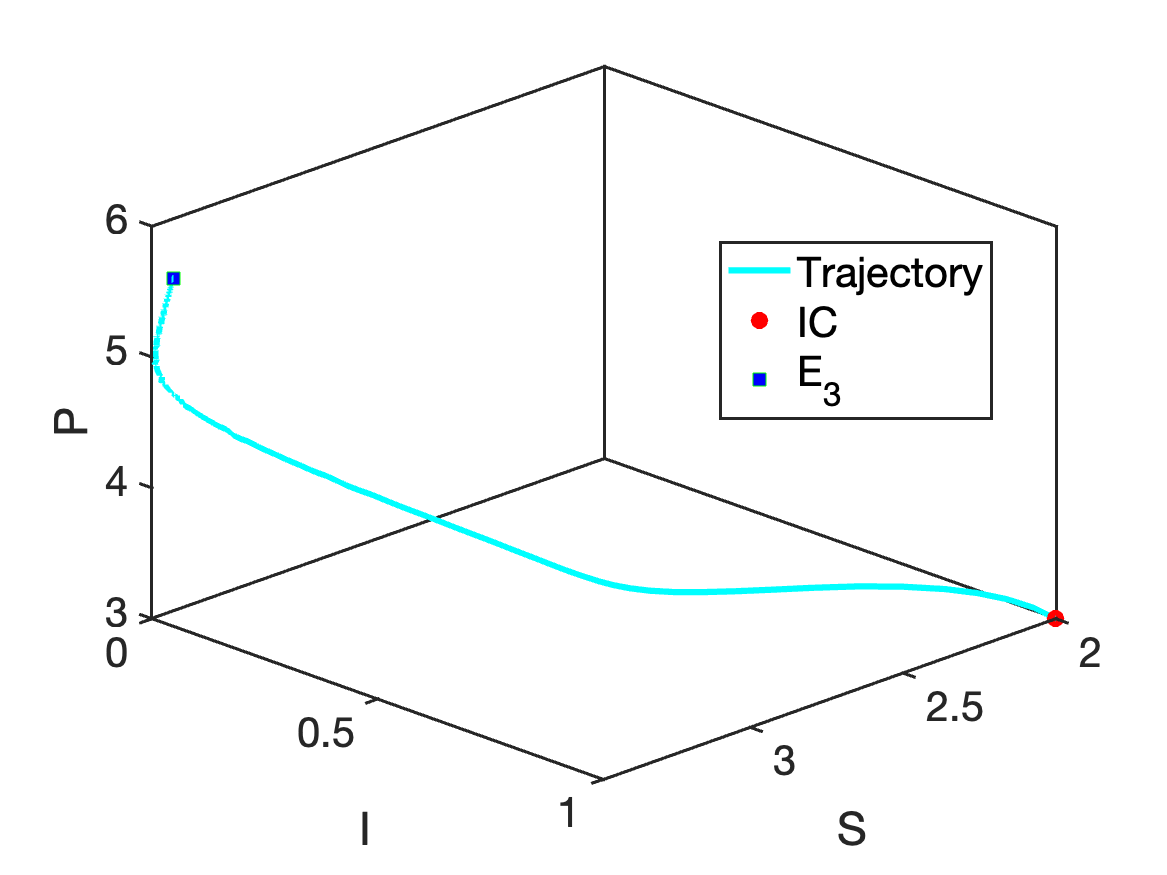}}    
\end{center}
\caption{Time series and phase portrait plots illustrating the transition from a stable coexistence state to an infected prey free state as the aggregation constant is altered. (a) stable coexistence equilibrium point $E_4=(2.61341,0.787546,2.78867)$, here $r=0.5$ (b) extinction of the infected population $E_3=(3.4077,0,5.54573)$, here $r=0.8$. (c) 3D phase portrait of the SIP model for $r=0.5$ (d)  3D phase portrait of the SIP model for $r=0.8$.  All other parameters are fixed and given as $a_0=3,~a_1=0.4,~a_2=0.8,~d_0=0.4,~d_1=0.7,~d_2=0.3,~d_3=0.4,~e_0=0.9,~K=4,~L=-0.5$. with initial condition $(S(0),I(0),P(0))=(2,1,3)$}
\label{fig:timeseries:diseaseControl}
\end{figure}

\section{Allee Effect-Driven Finite Time Extinction}\label{sec: Allee-driven extinction}
\noindent This section explores the novel concept of Allee effect-driven finite time extinction in susceptible prey populations. We move beyond traditional asymptotic extinction analyses to investigate the potential for rapid population collapse within a defined time frame. By focusing on the Allee effect's influence, we aim to identify the specific conditions under which susceptible prey can abruptly disappear. We will employ mathematical modeling and analytical techniques to provide a rigorous framework for understanding this phenomenon, offering insights into the transient dynamics and potential for rapid ecosystem shifts.

\begin{theorem}\label{thm:finite_time_extinction}
For the SIP model described by the equations in model \eqref{Mainsystem}, consider a specific parameter set and initial data $(S(0), I(0), P(0))$ that lead to uniform convergence to a stable coexistence state. If $S(0)<L^*$, where $L^*$ is the critical Allee threshold, then the susceptible prey population will eventually go extinct in finite time, starting from the same initial conditions.
\end{theorem}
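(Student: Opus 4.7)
The strategy rests on two ingredients that together force finite-time extinction rather than merely asymptotic decay: the strong Allee effect ($L > 0$), which renders the intrinsic growth term non-positive whenever $S < L$, and the sub-linear aggregation term $-d_0 S^r P$ with $r \in (0,1)$, which is the canonical source of finite-time extinction in comparison ODEs of the form $\dot{u} = -c u^r$.

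First, I would establish that the set $\{S : 0 < S < L\}$ is forward-invariant under the $S$-equation of the model \eqref{Mainsystem}. If $S(0) < L$, then $(S - L) < 0$, so the Allee factor $a_0 S\bigl(1-\tfrac{S+I}{K}\bigr)(S-L)$ is non-positive, where the sign of $\bigl(1-\tfrac{S+I}{K}\bigr)$ is controlled via the boundedness established in Section \ref{sec:preliminary results}. The terms $-d_0 S^r P$ and $-e_0 S I$ are strictly negative for positive $I, P$. Hence $\tfrac{dS}{dt} < 0$ on this invariant region, and $S(t)$ cannot cross the Allee threshold upward before reaching zero.

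Second, I would discard the non-positive Allee and mass-action terms to obtain the dominant differential inequality
\begin{equation*}
\frac{dS}{dt} \leq -d_0 S^r P(t).
\end{equation*}
To close this inequality, I need a uniform lower bound $P(t) \geq P_{\min} > 0$ on some interval $[0, T_0]$. Because the underlying parameter set supports uniform convergence to the coexistence equilibrium $E_4 = (S^*, I^*, P^*)$ from nearby data, continuity of the flow in initial conditions guarantees $P(t) \geq \tfrac{1}{2} P^* =: P_{\min}$ on some window $[0, T_0]$ whose length depends only on the parameters. Integrating the comparison equation $\dot{u} = -d_0 P_{\min} u^r$ with $u(0) = S(0)$ yields
\begin{equation*}
u(t)^{1-r} = S(0)^{1-r} - d_0 P_{\min}(1-r)\, t,
\end{equation*}
which reaches zero at $t^{\star} = S(0)^{1-r}/[d_0 P_{\min}(1-r)]$. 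Defining the critical threshold by
\begin{equation*}
L^{\star} := \bigl(d_0 P_{\min} (1-r) T_0\bigr)^{1/(1-r)},
\end{equation*}
the assumption $S(0) < L^{\star}$ forces $t^{\star} < T_0$, and the standard comparison theorem gives $0 \leq S(t^{\star}) \leq u(t^{\star}) = 0$, establishing finite-time extinction.

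The main obstacle is the coupled feedback between $S$ and $P$: as $S$ collapses, the predator's food source shrinks, which could cause $P$ itself to decay and spoil the bound $P(t) \geq P_{\min}$ before $S$ hits zero. The two-scale argument must therefore verify that extinction of $S$ is quantitatively faster than the natural decline of $P$. This is controlled by the exponent $r \in (0,1)$: the extinction time $t^{\star}$ scales like $S(0)^{1-r}$, so it can be made arbitrarily small by shrinking $S(0)$, whereas $P(t)$ decays at worst exponentially on its own time-scale. Making this quantitative—so that $L^{\star}$ depends only on the prescribed parameter set and not on unknowable properties of the full orbit—is the delicate step and justifies the implicit definition of $L^{\star}$ above.
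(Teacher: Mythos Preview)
Your approach is correct in spirit and in fact isolates the genuine finite-time mechanism more carefully than the paper does. The paper's argument proceeds in the opposite direction: it \emph{drops} the predation and infection terms $-d_0 S^r P - e_0 SI$ to obtain the upper bound $\tfrac{dS}{dt} \le a_0 S\bigl(1-\tfrac{S}{K}\bigr)(S-L)$, observes that this right-hand side is negative once $S<L$, and concludes $S(t)\to 0$ in finite time. That argument establishes monotone decrease but, strictly speaking, the comparison ODE $\dot u = a_0 u(1-u/K)(u-L)$ is Lipschitz at $u=0$ and yields only asymptotic extinction; the paper does not identify a sub-linear term driving finite-time collapse. You instead retain $-d_0 S^r P$ with $r\in(0,1)$ and compare against $\dot u = -d_0 P_{\min} u^r$, which \emph{does} vanish in finite time $t^\star = S(0)^{1-r}/[d_0 P_{\min}(1-r)]$, and you correctly flag that the price of this sharper mechanism is the need for a uniform lower bound on $P$ over a window long enough to contain $t^\star$. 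Your time-scale separation argument---$t^\star$ scales like $S(0)^{1-r}$ and can be forced below any fixed $T_0$ by shrinking $S(0)$, while $P$ decays at worst exponentially---is the right way to close this, and it makes the implicit definition of $L^\star$ honest. In short, the paper's proof is shorter because it sidesteps the coupling to $P$, but at the cost of not actually exhibiting a non-Lipschitz comparison; your route is longer but supplies the missing analytic ingredient.
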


\begin{proof}
Consider the first equation in the model \eqref{Mainsystem} for the susceptible prey population \( S(t) \):
\begin{align*}
\frac{dS}{dt} = a_0 S \left( 1 - \frac{S + I}{K} \right) (S - L) - d_0 S^r P - e_0 S I.
\end{align*}

\noindent To establish finite-time extinction, we first simplify the equation by neglecting the terms involving \( P \) and \( I \), leading to the following inequality:
\begin{align*}
\frac{dS}{dt} \leq a_0 S \left( 1 - \frac{S}{K} \right) (S - L).
\end{align*}

\noindent The important factor here is the term \( (S - L) \), which causes \( \frac{dS}{dt} \) to be negative when \( S < L \). We now assume that \( S(0) < L^* \), where \( L^* \) is a threshold such that \( S(0) < L \). Under this assumption, \( (S - L) < 0 \), implying that \( \frac{dS}{dt} < 0 \).

\noindent Moreover, as long as \( S(t) < L \), the term \( (S - L) \) remains negative, ensuring that \( S(t) \) continues to decrease. Thus, we conclude that if \( S(0) < L^* \), then:
\begin{align*}
\lim_{t \to t^*} S(t) = 0,
\end{align*}
and the susceptible prey population \( S(t) \) goes extinct in finite time.
\end{proof}

\begin{remark}\label{remark:FTE}
Figure \ref{fig:timeseriesFTE} presents a novel sequence of dynamic transitions in the SIP model \eqref{Mainsystem}. We see a shift from stable coexistence to an unstable predator-free equilibrium, and finally to the finite time extinction of susceptible prey, as the Allee threshold ($L$) progresses from weak (i.e., $L=-0.8$  and $L=0$) to strong (i.e., $L=0.1$). Ecologically, this process also results in the subsequent extinction of infected prey and predators, thus a total collapse of the population. This observation highlights a unique and complex response to increasing Allee effect threshold in population dynamics.
\end{remark}

\begin{figure}[hbt!]
\begin{center}
\subfigure[]{
    \includegraphics[width=6.15cm, height=6.15cm]{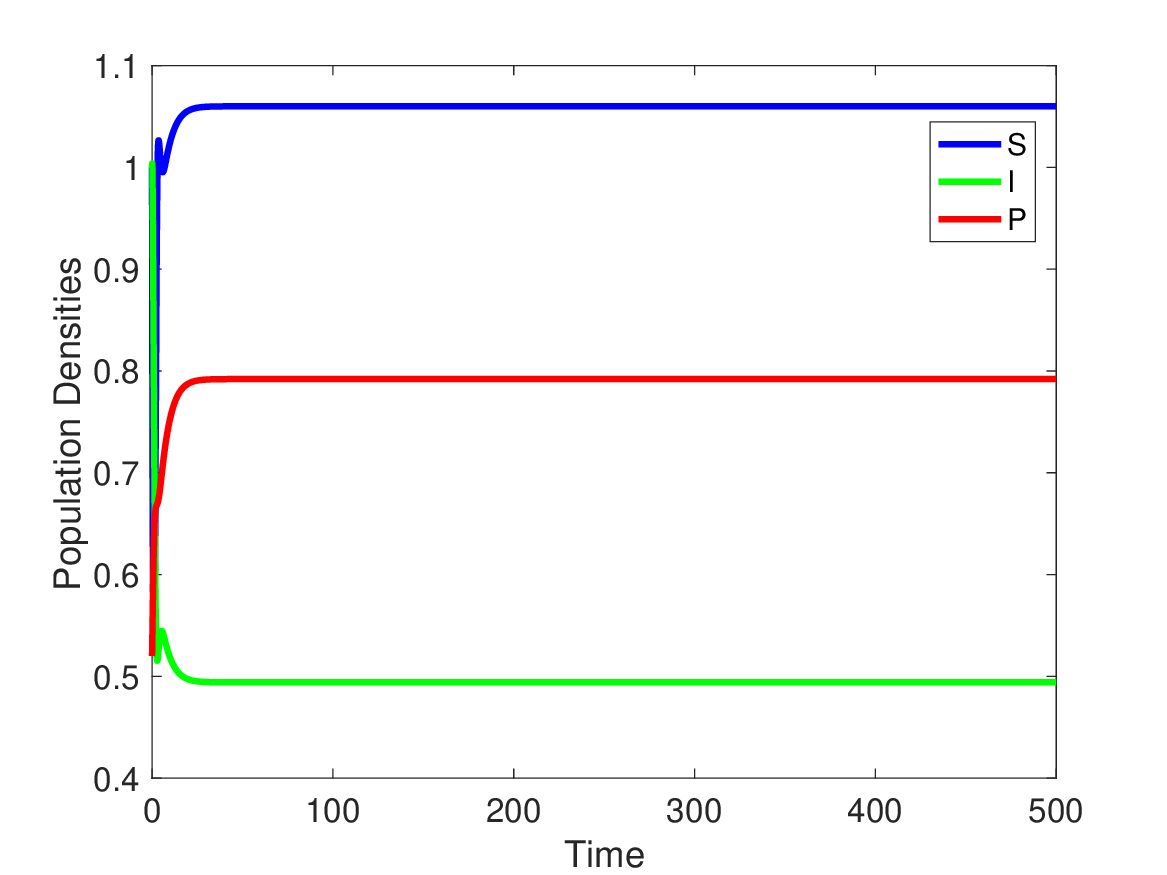}}
\subfigure[]{    
    \includegraphics[width=6.15cm, height=6.15cm]{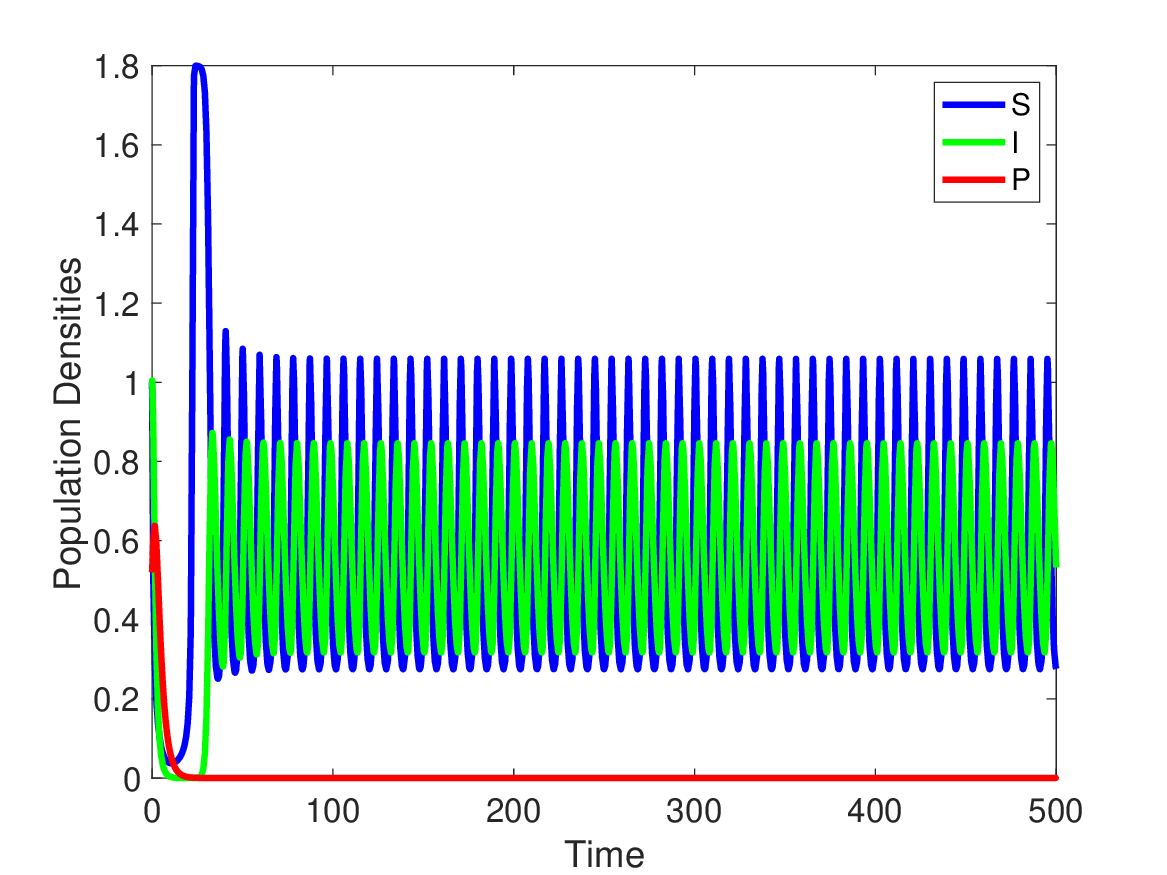}}
\subfigure[]{    
    \includegraphics[width=6.15cm, height=6.15cm]{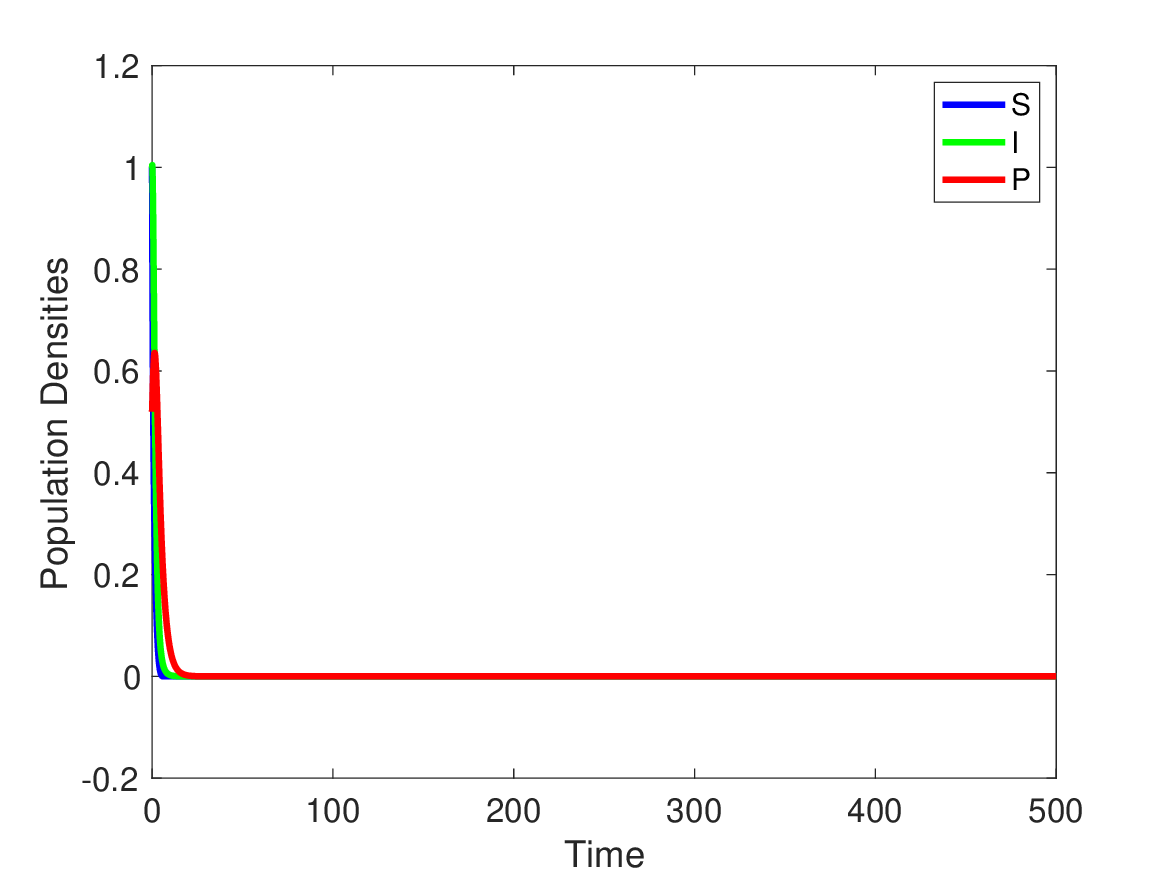}}
\end{center}
\caption{Time series plots illustrating the shift from a weak to a strong Allee effect, highlighting its impact on population dynamics. (a) stable interior equilibrium point ($1.06,0.494,0.792$), here $L=-0.8$ (b) unstable dynamics when $L=0$, (c) total population collapse, beginning with the finite-time extinction of the susceptible prey occurring at $L=0.1$ and $t=6.2$, followed by the eventual extinction of both the infected prey and predator population. All other parameters are fixed and given as $a_0=3,~a_1=0.5,~a_2=0.35,~d_0=0.4,~d_1=0.6,~d_2=0.1,~d_3=0.5,~e_0=0.92,~K=1.8,~r=0.5$ with initial condition $(S(0),I(0),P(0))=(1,1,0.52)$}
\label{fig:timeseriesFTE}
\end{figure}

\section{Conclusion}\label{sec:conclusion}
\noindent This study introduces a novel mathematical framework for analyzing the interplay between disease dynamics, predator-prey interactions, and Allee effects in ecological systems \cite{kumar2023dynamics,kang2014dynamics}. By incorporating both weak and strong Allee thresholds alongside prey aggregation behavior, it demonstrates that disease outbreaks and population stability are highly sensitive to specific ecological parameters \cite{rana2022complex,firdiansyah2020dynamics}. The findings reveal conditions under which finite-time extinction of infected prey can occur (please see Theorem \ref{thm:finite_time_extinction} for details), offering valuable insights into disease control strategies in wildlife populations. This result aligns with ecological findings \cite{ma2023asymptotic,kang2014dynamics}. Furthermore, through stability and bifurcation analyses, the existence of multiple equilibrium states and transitions, including transcritical, Hopf, and saddle-node bifurcations, which govern the long-term behavior of the system, are established, see Figures \ref{fig:equilibriumPoints} and \ref{fig:one_param_birfucation}.

A particularly significant result of this study is the identification of parameter regimes where disease-driven extinction can be mitigated through prey aggregation strategies, leading to an extinction threshold for the infected population, see Theorem \ref{thm:inf-prey extinction} and Figure \ref{fig:timeseries:diseaseControl}. This insight underscores the critical role of social and behavioral mechanisms in shaping disease persistence in ecological systems under weak Allee effect. Moreover, the observed Allee effect-driven finite-time extinction highlights an important phenomenon in population ecology: small populations are not only vulnerable to environmental fluctuations but may also collapse rapidly due to intrinsic growth constraints. The mathematical analysis provides a robust theoretical foundation for understanding such tipping points in ecological networks.

The implications of our results extend beyond theoretical ecology. In conservation biology, understanding how aggregation and Allee effect mechanisms influence disease spread can inform intervention strategies aimed at preserving at-risk species. We presented a bifurcation diagram in Figure \ref{fig:Allee vs aggregation} to depict the long term dynamics of the model when the aggregation constant and Allee threshold are modulated simultaneously. Additionally, the co-dimension two bifurcation structures uncovered in our model suggest potential applications in epidemiology, particularly in predicting and controlling pathogen-driven collapses in structured populations, see Figure \ref{fig:two_param_birfucation}. Figure \ref{fig:two_param_birfucation}(a) reveals that both the zero-Hopf and generalized Hopf points occur within the weak Allee effect regime, whereas Figure \ref{fig:two_param_birfucation}(b) shows that the cusp and Bogdanov-Takens points arise in the strong Allee effect regime. Thus, these bifurcations highlight the sensitivity of eco-epidemiological systems to population thresholds, emphasizing the need for conservation strategies that mitigate strong Allee effects to prevent irreversible population declines. The methodologies employed in this study, including analytical bifurcation theory and numerical continuation methods, can be extended to explore broader classes of eco-epidemiological models incorporating stochasticity, spatial heterogeneity, and multi-species interactions \cite{lunn2021spatial}.

Future research should aim to refine these models by incorporating empirical data to validate theoretical predictions and assess the real-world applicability of our findings. Additionally, the exploration of higher-dimensional extensions, such as models including multiple predator species or seasonally varying parameters, could yield deeper insights into the resilience of ecological communities \cite{panigoro2019dynamics,singh2024population}. By integrating machine learning techniques and data assimilation methods, future studies can enhance model predictability and improve decision-making in conservation and disease management \cite{ponnambalam2016multi}.

Furthermore, investigating the role of evolutionary pressures, such as genetic adaptation in prey and predator populations, could offer a more comprehensive understanding of long-term ecological stability \cite{shaw2023rapid,saakian2022gene}. Another promising direction involves the incorporation of spatially explicit models, allowing researchers to examine how localized interactions and migration patterns influence disease dynamics and population persistence \cite{lunn2021spatial}. The use of agent-based modeling techniques could further enhance our understanding of individual-level behaviors and their emergent effects at the population level \cite{ponnambalam2016multi}.

\section*{Acknowledgment}
\noindent KAF, ZO, and DL would like to express their profound gratitude to CURM mini-grant and NSF grant DMS-1722563.

\section*{Declaration of competing interest}
\noindent The authors declare that they have no known competing financial interests or personal relationships that could have appeared to influence the work reported in this paper.

\section*{Availability of data and materials}
\noindent Not applicable.
%


\bibliographystyle{unsrt}
\bibliography{manu1.bib}

\end{document}